\theoremstyle{definition}
\newtheorem{mydef}{Definition}[section]
\newtheorem{mycor}[mydef]{Corollary}
\newtheorem{mythm}[mydef]{Theorem}
\newtheorem{myremark}[mydef]{Remark}
\newtheorem{myconjecture}[mydef]{Conjecture}
\newtheorem{myexample2}[mydef]{Example}
\newtheorem{myproposition}[mydef]{Proposition}
\newtheorem{mylemma}[mydef]{Lemma}
\newtheorem{mytable}[mydef]{Table}
\numberwithin{equation}{section}
\title{Some cases of a conjecture on L-functions of twisted Carlitz modules}
\author[1]{S. Ehbauer }
\author[1]{D. Logachev \thanks{ \url{logachev94@gmail.com} (corresponding author)} }
\author[2]{M. Sarraff Nascimento \thanks{\url{msnascimento01@gmail.com};}}
\affil[1]{Departamento de Matem\'atica, Instituto de Ci\^encias Exatas, UFAM Manaus, Brasil;} \affil[2]{Departamento de Matem\'atica, UEA Parintins, Brasil}
\begin{document}
\maketitle
\begin{abstract}
We prove two polynomial identies which are particular cases of a conjecture arising in the theory of L-functions of twisted Carlitz modules. This conjecture is stated in \cite[p. 153, 9.3, 9.4]{GL16} and \cite[p. 5, 0.2.4]{LZ17}.
\end{abstract}


\section{Introduction}
The papers \cite{GL16} and \cite{LZ17} develop a theory of $L$-functions of twists of Carlitz modules. Results of these papers depend on an important conjecture \cite[9.3.]{GL16}, \cite[Conj. 0.2.4.]{LZ17}, present paper Conjecture \ref{Conjecture}. For example, it implies that varieties $X(2,n,m,i)$, objects of research of \cite{GL16} and \cite{LZ17}, do not depend on $n$.
\\

Some particular cases of Conjecture \ref{Conjecture} are already proved in \cite{GL16}. The purpose of the present paper is to prove two more particular cases of Conjecture \ref{Conjecture} announced in \cite{GL16}, 9.13 a, b: \thref{SarrafTheorem} and  \thref{EhbauerTheorem}.
\\

To state Conjecture \ref{Conjecture}, we give here an exposition of the theory of L-functions of Anderson t-motives. We borrow the corresponding part of text of \cite{LZ17}. For more details, the reader can look \cite{G92}, \cite{A00}, \cite{L09}.
\\

The authors are grateful to Darij Grinberg, CSENG School of Mathematics, University of Minnesota, for the formulation and the proof of proposition \ref{DetIdent}.
\medskip

{\bf Exposition of the theory of $L$-functions of the twisted $n$-th tensor powers of Carlitz modules.}

\subsection*{Anderson t-motives}

Let $q$ be a power of a prime number $p$ and $\mathbb{F}_q$ the finite field of order $q$. In analogy to the extensions $\mathbb{Z} \subset \mathbb{Q} \subset \mathbb{R} \subset \mathbb{C}$ we consider extensions starting with the polynomial ring $\mathbb{F}_q[\theta]$ of one independent variable $\theta$ over $\mathbb{F}_q$. The field $\mathbb{F}_q(\theta)$ is its field of rational functions. It is the quotient field analogous to $\mathbb{Q}$. The field of the Laurent series $\mathbb{F}_q((1/\theta))$ is the functional field analogous to $\mathbb{R}$.
\\

By definition, $\mathbb{C}_{\infty}$ is the completion of the algebraic closure of $\mathbb{F}_q((1/\theta))$, it is the analogue field to $\mathbb{C}$.

The Anderson ring $\mathbb{F}_q(\theta)[ t, \tau]$ is the ring of
non-commutative polynomials over $\mathbb{F}_q(\theta)$ satisfying the following relations:

\begin{enumerate*}[leftmargin=1cm, label=\arabic*)]
\item $t\cdot \tau = \tau \cdot t$,\quad \quad
\item $\forall a\in \mathbb{F}_q(\theta)\ \ t\cdot a=a \cdot t$,\quad \quad
\item $\forall a\in \mathbb{F}_q(\theta) \ \ \tau \cdot a = a^q \cdot \tau$. \\
\end{enumerate*}

We need the following (less general than in \cite{A86}) version of the definition of Anderson t-motives $M$ over $ \mathbb{F}_q(\theta)$:
\medskip

\begin{mydef} An Anderson t-motive $M$ is a $\mathbb{F}_q(\theta)[ t, \tau]$-module such that
\begin{enumerate}[noitemsep, label=\arabic*)]
\item $M$ considered as a $\mathbb{F}_q(\theta)[ t]$-module is free of finite dimension $r$;
\item $M$ considered as a $\mathbb{F}_q(\theta)[\tau]$-module is free of finite dimension $n$;
\item There exists $k>0$ such that $(t-\theta)^k M/\tau M=0$.
\end{enumerate}
\end{mydef}
Analogously, we can define an Anderson t-motive over $\mathbb{C}_{\infty}$.

Numbers $r$, resp. $n$ are called the rank, resp. the dimension of $M$.

The Carlitz module $\mathfrak{ C}$ is an Anderson t-motive having $r=n=1$. Therefore we have only one element $e=e_1$ in a basis of $M$ over $\mathbb{F}_q(\theta)[ \tau]$ and $\mathfrak{C}$ is given by the equation $te=\theta e +\tau e$. We have: $e$ is also the only element of a basis of $\mathfrak{C}$ over $\mathbb{F}_q(\theta)[t]$, and the multiplication by $\tau$ is given by the formula $$\tau e=(t-\theta)e$$

We shall consider also the $n$-th tensor power of $\mathfrak C$ over the ring $\mathbb{F}_q(\theta)[t]$, denoted by $\mathfrak C^n$. It has the rank $r=1$. The basis has the only element
$e_n=\underbrace{e\otimes e \otimes e \otimes ... \otimes e}_{\substack{\text{n times}}}$

The action of $\tau$ on $e_n$ is given by the formula
\begin{equation}
\tau e_n=\tau e \otimes \tau e \otimes \ldots \otimes \tau e =(t-\theta)^n e_n
\end{equation}
It is easy to check that this formula really defines an Anderson t-motive of rank $r=1$ and dimension $n$.

A twist of $\mathfrak C^n$ is an Anderson t-motive over $ \mathbb{F}_q(\theta)$ which is isomorphic to $\mathfrak C^n$ over $\mathbb{C}_{\infty}$. It is easy to check that these twists are parameterized by polynomials
\begin{equation}
P=\sum_{i=0}^m a_i\theta^i\in \mathbb{F}_q[\theta]
\label{eq:PolynomP}
\end{equation}

Namely, for this $P$ we denote by $\mathfrak C^n_P$ an Anderson t-motive given by the formula

\begin{equation}
e_n=P(t-\theta)^ne_n \; \; \text{ with } e_n \text{ as earlier.}
\end{equation}

Two such twists $\mathfrak C^n_{P_1}$, $\mathfrak C^n_{P_2}$ are isomorphic over $\mathbb{F}_q(\theta)$ iff $P_{1}/P_{2}\in (\mathbb{F}_q(\theta)^{*})^{(q-1)}$, where $(\mathbb{F}_q(\theta)^{*})^{(q-1)}$ are the $(q-1)$-th powers of nonzero elements of $\mathbb{F}_q(\theta)$.

\begin{mycor}
\label{CorollaryNoTwists}
There are no twists for $q=2$.
\end{mycor}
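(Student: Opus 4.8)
The plan is to derive the statement immediately from the classification of twists of $\mathfrak{C}^n$ recalled just above, observing that the case $q=2$ is degenerate because the exponent $q-1$ equals $1$. Concretely, every twist of $\mathfrak{C}^n$ over $\mathbb{F}_2(\theta)$ is, by the parametrization preceding the corollary, of the form $\mathfrak{C}^n_P$ for some nonzero $P\in\mathbb{F}_2[\theta]$ (the polynomial must be nonzero for the defining relation $e_n=P(t-\theta)^n e_n$ to yield an Anderson t-motive of rank $1$). Taking $\mathfrak{C}^n=\mathfrak{C}^n_1$ as the base point, I would then apply the isomorphism criterion: $\mathfrak{C}^n_P\cong\mathfrak{C}^n_1$ over $\mathbb{F}_2(\theta)$ if and only if $P/1=P$ lies in $(\mathbb{F}_2(\theta)^{*})^{(q-1)}$.

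The decisive point is that for $q=2$ one has $(\mathbb{F}_2(\theta)^{*})^{(q-1)}=(\mathbb{F}_2(\theta)^{*})^{1}=\mathbb{F}_2(\theta)^{*}$, the entire multiplicative group. Hence the condition $P\in(\mathbb{F}_2(\theta)^{*})^{(q-1)}$ holds for every nonzero $P\in\mathbb{F}_2[\theta]\subset\mathbb{F}_2(\theta)^{*}$. Therefore $\mathfrak{C}^n_P\cong\mathfrak{C}^n$ over $\mathbb{F}_2(\theta)$ for all admissible $P$; equivalently, any two twists $\mathfrak{C}^n_{P_1}$ and $\mathfrak{C}^n_{P_2}$ are mutually isomorphic, so there is only the trivial twist.

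There is essentially no obstacle here; the only thing requiring a word of care is the reading of "no twists" as "no nontrivial twists", i.e. that every twist is isomorphic over $\mathbb{F}_q(\theta)$ to $\mathfrak{C}^n$ itself, together with the convention that the parameter $P$ ranges over the nonzero polynomials of \eqref{eq:PolynomP}. Once this is fixed, the corollary follows in one line from the two displayed facts above. If desired, I would add the conceptual remark that this reflects the vanishing of the relevant cohomology, since the twists are governed by $\mathrm{Aut}(\mathfrak{C}^n_{/\mathbb{C}_\infty})=\mathbb{F}_q^{*}$, which is trivial precisely when $q=2$.
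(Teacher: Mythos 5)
Your proof is correct and follows exactly the reasoning the paper intends: the corollary is an immediate consequence of the stated isomorphism criterion, since for $q=2$ the group $(\mathbb{F}_q(\theta)^{*})^{(q-1)}$ is all of $\mathbb{F}_2(\theta)^{*}$, so every twist $\mathfrak C^n_P$ is isomorphic to $\mathfrak C^n$ over $\mathbb{F}_2(\theta)$. Your reading of ``no twists'' as ``no nontrivial twists'' matches the paper's usage.
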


\subsection*{$L$-functions}

For any Anderson t-motive $M$ over $ \mathbb{F}_q(\theta)$ we can define its $L$-function
$L(M,T)\in \mathbb{F}_q[t][[T]]$, as in \cite{G92} or \cite[p 2603]{L09}, where the letter $u$ is used for the variable $\tau$.

A very simple and explicit definition of $L(M,T)$ is given in \cite[p. 123]{GL16}. There is a general formula for $L(M,T)$ based on a version of the Lefschetz trace formula as in \cite[Section 9]{B12} or \cite{L09} or \cite[p. 127, 3.4]{GL16} or in the original paper \cite{A00}.
In the case $M=\mathfrak{C}^n_P$ an explicit formula for $L(\mathfrak C^n_P,T)$ is given in \cite[p. 126, 3.3.]{GL16}.
\\

Let $P$ be as in equation \eqref{eq:PolynomP}.
We denote by $\bar k=\left[ \frac{m+n}{q-1}\right]$ the  integer part of $\frac{m+n}{q-1}$ and let $\mathcal{ M}(P,n,\bar k)=\mathcal{M}(a_*,n,\bar k)$ be the matrix in $M_{\bar k\times \bar k}(\mathbb{F}_q[t])$ whose $(i,j)$-th entry is defined by the formula

\begin{equation}
\mathcal M(P,n,\bar k)_{i,j}=\sum_{l=0}^n (-1)^l\binom{n}{l}a_{jq-i-l}\ t^{n-l}\quad,
\label{eq:MPolynomialEntry}
\end{equation}
where $a_l=0$ if $l\not\in \{0,\dots, m\}$.

For the particular case $n=1$ we have $\mathcal M(P,1,\bar k)_{i,j}=a_{jq-i}t-a_{jq-i-1}$ and therefore
\begin{equation}
\mathcal M(P,1,\bar k)=\left(\begin{matrix} a_{q-1}t-a_{q-2} &  a_{2q-1}t-a_{2q-2}   &   \dots & a_{\bar kq-1}t-a_{\bar kq-2}  \\ a_{q-2}t-a_{q-3}
 &  a_{2q-2}t-a_{2q-3} &   \dots & a_{\bar kq-2}t-a_{\bar kq-3} \\
a_{q-3}t-a_{q-4} &  a_{2q-3}t-a_{2q-4}  &   \dots & a_{\bar kq-3}t-a_{\bar kq-4} \\
\dots & \dots & \dots  & \dots \\ a_{q-\bar k}t-a_{q-\bar k-1}
 &  a_{2q-\bar k}t-a_{2q-\bar k-1}   &  \dots & a_{\bar kq-\bar k}t-a_{\bar kq-\bar k-1} \end{matrix} \right)
\label{eq:MPolynomn1}
\end{equation}

\begin{mythm}[{\cite[p. 126, 3.3.]{GL16}}]
$$L(\mathfrak C^n_P,T)=\det(I_{\bar k} - \mathcal M(P,n,\bar k)T)$$.
\label{thm}
\end{mythm}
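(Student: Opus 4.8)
Since the general Lefschetz-type trace formula is only set up afterwards (\cite[p.~127, 3.4]{GL16}), the plan is to work directly from the explicit definition of $L(M,T)$ recorded in \cite[p.~123]{GL16}, using the natural model $M=\mathbb F_q[\theta][t]\cdot e_n$, $\tau e_n=P(t-\theta)^ne_n$. First I would turn the definition into a closed ``Dirichlet series over monic polynomials''. Iterating $\tau$ gives $\tau^{\,d}e_n=\bigl(\prod_{i=0}^{d-1}P(\theta^{q^i})(t-\theta^{q^i})^n\bigr)e_n$; reducing modulo a monic prime $\mathfrak p\in\mathbb F_q[\theta]$ of degree $d$ identifies the $\mathfrak p$-Frobenius on $M\bmod\mathfrak p$ (free of rank one over $(\mathbb F_q[\theta]/\mathfrak p)[t]$) with multiplication by $\operatorname{Res}(\mathfrak p,P)\,\mathfrak p(t)^n\in\mathbb F_q[t]$, so the Euler factor at $\mathfrak p$ is $\bigl(1-\operatorname{Res}(\mathfrak p,P)\,\mathfrak p(t)^nT^{\deg\mathfrak p}\bigr)^{-1}$. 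As this coefficient is completely multiplicative, expanding the Euler product gives $L(\mathfrak C^n_P,T)=\sum_{a\ \mathrm{monic}}\operatorname{Res}(a,P)\,a(t)^nT^{\deg a}$.

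Next I would extract the coefficient of $T^d$. Writing $P(\theta)=a_m\prod_{s=1}^m(\theta-\beta_s)$ over $\overline{\mathbb F}_q$, one has $\operatorname{Res}(a,P)=(-1)^{md}a_m^{\,d}\,a(\beta_1)\cdots a(\beta_m)$ for $a$ monic of degree $d$, so up to the scalar $(-1)^{md}a_m^{\,d}$ the coefficient of $T^d$ is $\Sigma_d:=\sum_{a\ \mathrm{monic},\ \deg a=d}\prod_{j=1}^{N}a(\gamma_j)$, where $(\gamma_j)_{j=1}^{N}$ lists $\beta_1,\dots,\beta_m$ together with $n$ copies of $t$ and $N=m+n$. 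Expanding $a(\gamma_j)=\sum_{i\le d}c_i\gamma_j^{\,i}$, with $c_d=1$ and $c_0,\dots,c_{d-1}$ the free coefficients of $a$, and summing over $a$, each inner sum $\sum_{c\in\mathbb F_q}c^{\,m_i}$ vanishes unless $m_i\ge 1$ and $q-1\mid m_i$; hence $\Sigma_d$ vanishes unless the $N$ slots accommodate $d$ disjoint blocks of size $\ge q-1$ each, i.e.\ unless $d(q-1)\le m+n$ — which already forces $\deg L\le\bar k$ — and for $d\le\bar k$ it equals $(-1)^d$ times an explicit constrained sum of monomials $\prod_j\gamma_j^{\,i_j}$.

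Finally I would match this with $\det(I_{\bar k}-\mathcal M T)=\sum_{d=0}^{\bar k}(-1)^d\bigl(\text{sum of the }d\times d\text{ principal minors of }\mathcal M\bigr)T^d$: substituting $\mathcal M_{i,j}=\sum_l(-1)^l\binom nl a_{jq-i-l}t^{\,n-l}$ and re-expressing the $a$'s through the $\beta_s$ and each factor $t^{\,n-l}$ through the $n$ copies of $t$, one verifies that the symmetric-function expansion of these principal minors reproduces exactly the constrained monomial sum $\Sigma_d$ of the previous step. I expect this last step to be the main obstacle: it is a purely combinatorial identity relating ``monic polynomials of degree $d$'' to ``$d\times d$ principal minors of $\mathcal M$'', and making the index ranges and signs line up is the delicate part, the earlier steps being routine. (As an independent check the same formula also falls out of the trace formula of \cite[p.~127, 3.4]{GL16} applied to $\mathfrak C^n_P$: its cohomology is a free $\mathbb F_q[t]$-module of rank $\bar k$ — the divisor $q-1$ being the Frobenius dilation of the uniformiser $\theta^{-1}$ at $\infty$ — with basis built from $\theta e_n,\dots,\theta^{\bar k}e_n$, on which the induced Frobenius, read off from $\tau(\theta^{\,j}e_n)=\theta^{\,qj}P(\theta)(t-\theta)^ne_n$, has matrix precisely \eqref{eq:MPolynomialEntry}.)
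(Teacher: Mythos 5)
First, note that the paper you are working from does not prove this theorem at all: it is quoted from \cite{GL16} (3.3), so there is no internal proof to compare with. Relative to the proof in \cite{GL16}, which is the ``elementary'' computation in the spirit of Anderson \cite{A00}, your route is of the same kind: definition of $L$ as an Euler product, expansion into a sum over monic polynomials with the completely multiplicative coefficient $\operatorname{Res}(a,P)\,a(t)^n$, and the vanishing criterion for the character sums $\sum_{c\in\mathbb F_q}c^{m_i}$. Those first two stages of your sketch are essentially correct, and they do legitimately yield the degree bound $\deg_T L(\mathfrak C^n_P,T)\le\bar k$ (item 1 of the Remark following the theorem).

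The genuine gap is your final step. The assertion that the constrained monomial sum $\Sigma_d$ (after reinstating the scalar $(-1)^{md}a_m^{\,d}$ and rewriting everything back in terms of $a_0,\dots,a_m$ and $t$) coincides with the sum of the $d\times d$ principal minors of the matrix with entries \eqref{eq:MPolynomialEntry} is not a routine verification left to the reader: it is the entire content of the theorem, and you do not carry it out -- you yourself flag it as the main obstacle. Concretely, one must produce a sign-compatible bijection (or generating-function identity) between the admissible index assignments (each index $0\le i<d$ hit by a positive multiple of $q-1$ of the $m+n$ slots, the $n$ slots equal to $t$ accounting for the factors $(-1)^l\binom nl t^{\,n-l}$) and the permutation expansion of the principal minors indexed by $d$-element subsets of $\{1,\dots,\bar k\}$, keeping track of the signs $(-1)^d$ from the character sums and $(-1)^{md}a_m^{\,d}$ from the resultant. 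None of this bookkeeping appears, so what you have is a plausible programme (essentially the one followed in \cite{GL16}) rather than a proof. The parenthetical ``independent check'' via the trace formula has the same status: the claims that the relevant cohomology is $\mathbb F_q[t]$-free of rank $\bar k$ with a basis built from $\theta e_n,\dots,\theta^{\bar k}e_n$, and that Frobenius acts on it by precisely the matrix \eqref{eq:MPolynomialEntry}, are again exactly what would have to be proved, and the aside about $q-1$ and the uniformizer at $\infty$ is too vague to carry any weight.
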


\begin{myremark}
~
\begin{enumerate}[noitemsep]
\item Theorem \ref{thm} implies that $L(\mathfrak C^n_P,T)\in (\mathbb{F}_q[t])[T]$ is of degree $\le \bar k$ (for a general $M$ the $L$-function $L(M,T)$ can be a power series in $T$).
\medskip
\item $\mathfrak M(P,n,k)$ of \cite[(3.1.), (3.2.)]{GL16} is $\mathcal M(P,n,\bar k)^t$. Transposition is not important, because we consider determinants.
\medskip
\item $\mathcal M(P,n,\bar k)^t$ is (up to a non-essential change of indices) a particular case of the matrix from \cite[1.5.]{FP98}.
\medskip
\item Formula for $L(\mathfrak C^n_P,T)$ is concordant with the natural inclusion of the set of polynomials of degree $\le m$ to the set of polynomials of degree $\le m'$, where $m'>m$.
\end{enumerate}
\end{myremark}

\begin{myremark}[Non-trivial part]
~
\\

If $\frac{m+n}{q-1}$ is integer then the last column of $\mathcal M(P,n,\bar k)$ has only one non-zero element, namely its lower element which is equal to $(-1)^na_m$. For this case we denote $k:=\bar k-1=\frac{m+n}{q-1}-1$ and we consider the $k\times k$-submatrix of $\mathcal M(P,n,\bar k)$ formed by elimination of its lowest row and last column. We denote this submatrix by $\mathcal M_{nt}(P,n,k)=\mathcal M_{nt}(a_*,n,k)$ and  the L-function $$L_{nt}(\mathfrak C^n_P,T):=\det(I_{k} - \mathcal M_{nt}(P,n,k)T) \text{ (the non-trivial part).}$$
We have
\begin{equation} \label{DefLfunction}
L(\mathfrak C^n_P,T)=L_{nt}(\mathfrak C^n_P,T)\cdot (1-(-1)^na_mT)
\end{equation}
\end{myremark}

We can consider $a_0,\dots, a_m$ as abstract elements, not necessarily as elements of $\mathbb{F}_q$. Particularly, they can belong to a field $K$ of any characteristic. In this general settings, we denote the non-trivial part $L_{nt}(\mathfrak C^n_P,T)$ of the $L$-function by:
\begin{equation}
CH(\mathcal M_{nt}(a_*,n,k),T):= \det (I_{ k} - \mathcal M_{nt}(P,n, k)T)\in \mathbb{Z}[a_0,\ldots, a_m][t][T]
\end{equation}

We define polynomials $H_{i,j,n}(m)$ as coefficients of $CH(\mathcal M_{nt}(a_*,n,k),T)$, by the following formula:

\begin{equation}\label{CHarMnt}
CH(\mathcal M_{nt}(a_*,n,k),T)=\sum_{i=0}^k \sum_{j=0}^{n(k-i)}H_{i,j,n}(m)t^jT^{k-i} \quad \text{ with }
H_{i,j,n}(m) \in \mathbb{Z} [a_0,\dots, a_m]
\end{equation}
For $l=1, \ldots k$ we define varieties $X(q,n,m,l)\subset \bar K^{m+1}$ ($\bar K$ the algebraic closure of $K$) by
\begin{equation}
X(q,n,m,l) = \{(a_0, \ldots ,a_m) \in \bar K^{m+1} \vert H_{i,j,n}(m) = 0 \text{ for }i = 0, \ldots , l-1 \text{ and all } j
\}
\end{equation}
The polynomials $H_{i,j,n}(m) \in \mathbb{Z} [a_0,\dots, a_m]$ are homogeneous polynomials. By this we can consider $X(q,n,m,l)\subset P^m(\bar K)$ as projective varieties.
Since $H_{k,0,n} =1$ we have $X(q,n,m,m) = \emptyset$.
\\
\medskip

The meaning of the varieties $X(q,n,m,l)\subset P^m(\bar K)$ is the following: Let $K=\mathbb F_q$ and let $P$ be from \ref{eq:PolynomP}. We have \cite[Def. 0.1.11]{LZ17}:
\begin{mydef}
~
\\
The analytic rank of $\mathfrak C^n_P$ at $\infty$ is $\bar{k}- \deg_T L(\mathfrak C^n_P,T)$.
It is denoted by $r_\infty=r_\infty(n,P) = r_\infty(n,a_*)$.
\end{mydef}
\noindent
The equalities \ref{DefLfunction} - \ref{CHarMnt} show, that the set of coefficients $\{a_0, \ldots , a_m\}$, such that $r_\infty(n, a_*) \ge l$, is exactly the set $X(q,n,m,l)(\mathbb{F}_q)$.
\\
\\
In the following we consider exclusively $q=2$ (for $K=\mathbb{C}$ we get a non-trivial theory, although there are no twists over $\mathbb{F}_2$).
\medskip \\
This concludes $k:=m+n-1$.
We extend the definition of $\mathcal M(P,n,\bar k)$ to the value $n=0$. We analogously get a matrix $\mathcal M_{nt}(P,0,m)\in M(m-1\times m-1,\mathbb{Z}[a_0,\ldots, a_m])$ with the entries $$\mathcal M_{nt}(P,0,m)_{i,j} =  a_{2j-i}\quad,$$
which has the form
\begin{equation}
\mathcal M_{nt}(P,0,m)=
\left(\begin{matrix}
a_{1}&  a_{3}&  a_5 & \dots & a_{m-2}&a_m & 0 & 0 & \ldots & 0 \\
a_{0}&  a_{2}&  a_4 & \dots & a_{m-3}&a_{m-1} & 0 & 0 & \ldots & 0\\
0    &  a_{1}&  a_3 & \dots & a_{m-4}&a_{m-2} & a_m & 0 & \ldots & 0\\
0    &  a_0  &  a_2 & \ldots & a_{m-5}& a_{m-3} & a_{m-1} & 0 & \ldots & 0 \\
\dots & \dots & \dots  & \dots & \ldots&\ldots &\ldots & \ldots & \ldots & \ldots  \\
0 & 0 & 0 & \dots & a_1& a_3 & a_5 & a_7  & \ldots & a_m \\
0 & 0 & 0 & \dots & a_0 & a_2 & a_4 & a_6  & \ldots & a_{m-1}
\end{matrix} \right) \quad \text{ for $m$ odd.}
\label{eq:MP0ntmodd}
\end{equation}
(For even $m$ we have a similar form.)
\\
We also have
\begin{equation}
CH(\mathcal M_{nt}(a_*,0,m-1),T):= \det (I_{ m-1} - \mathcal M_{nt}(P,0, m-1)T) \in \mathbb{Z} [a_0,\dots, a_m]
\end{equation}
The entries of the matrix $\mathcal M_{nt}(P,0,m-1)$ do not depend on variable $t$. Therefore
\begin{equation}
CH(\mathcal M_{nt}(a_*,0,m-1),T)=\sum_{i=0}^{m-1}H_{i,0,0}(m)T^{m-1-i} \quad \text{ with }
H_{i,0,0}(m) \in \mathbb{Z} [a_0,\dots, a_m].
\end{equation}
We give the polynomials $H_{i,0,0}(m)$ an extra-notation:
$$D(m,i):=H_{i,0,0} \text{ for } i \in \{0,\ldots, m-1 \}
\quad \text{ and } \quad
D(m,i):=0 \text{ for } i \notin \{0,\ldots, m-1\}.$$
For $l=1, \ldots m$ we define the algebraic variety $X(m,l)$ as the common zeros of the homogeneous polynomials $D(m,0),\dots,D(m,l-1)$.

We have the following conjecture:
\begin{myconjecture}\label{Conjecture}
~

For $q=2$ and for any $i,j,m,n$ there exists $\gamma \in \mathbb{N}$ such that $H_{i,j,n}^\gamma(m)$ is in the ideal $<D(m,0),\dots,D(m,i)>$ generated by
$D(m,0),\dots,D(m,i)$:
\begin{equation}
\label{eq:Conjecture}
H_{ijn}^\gamma(m) \in <D(m,0),\dots,D(m,i)>
\end{equation}
\end{myconjecture}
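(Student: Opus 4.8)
The plan is to absorb the twist $(t-\theta)^n$ into the polynomial $P$, thereby replacing the two–parameter family $H_{i,j,n}(m)$ by the single family $D(m,\cdot)$, and then to reduce Conjecture~\ref{Conjecture} to an explicit determinantal identity.

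\emph{Step 1: absorbing the twist.} Comparing \eqref{eq:MPolynomialEntry} for $q=2$ with the entry formula $\mathcal M_{nt}(P,0,\cdot)_{i,j}=a_{2j-i}$, one sees that $\sum_l(-1)^l\binom nl a_{2j-i-l}t^{n-l}$ is precisely the coefficient of $\theta^{2j-i}$ in $(t-\theta)^nP(\theta)$. Viewing $(t-\theta)^nP$ as a polynomial in $\theta$ of degree $m+n$ with coefficients in $\mathbb Z[a_0,\dots,a_m][t]$, this yields the matrix identity $\mathcal M_{nt}(P,n,m+n-1)=\mathcal M_{nt}\big((t-\theta)^nP,0,m+n-1\big)$, and hence, on passing to characteristic polynomials,
\[ \sum_j H_{i,j,n}(m)\,t^j=D\big(m+n,i\big)\big((t-\theta)^nP\big)\quad\text{in }\mathbb Z[a_*][t]. \]
Thus each $H_{i,j,n}(m)$ is a $t$–coefficient of $D(m+n,i)\big((t-\theta)^nP\big)$, and Conjecture~\ref{Conjecture} becomes the assertion that every such coefficient has a power in $\langle D(m,0),\dots,D(m,i)\rangle$; over $\bar K$ this is, by the Nullstellensatz, the containment of projective varieties $X(m,i+1)\subseteq X(2,n,m,i+1)$, i.e. the twisted varieties contain the untwisted one.

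\emph{Step 2: reduction to $n=1$.} Because $(t-\theta)^n=(t-\theta)\cdot(t-\theta)^{n-1}$ involves a single value of $t$, it is enough to establish, for every $m'$ and $i$, an identity
\[ D(m'+1,i)\big((t-\theta)Q\big)=\sum_{j=0}^{i}c_{i,j}\,D(m',j)(Q), \]
valid for a generic polynomial $Q$ of degree $m'$, with the $c_{i,j}$ polynomials in the coefficients of $Q$ and in $t$: substituting $Q\mapsto(t-\theta)Q$ $n$ times and applying the identity successively at moduli $m,m+1,\dots,m+n-1$ then writes $D(m+n,i)\big((t-\theta)^nP\big)$, hence (extracting $t$–coefficients, and noting that the $D(m,j)(P)$ are themselves $t$–free) each $H_{i,j,n}(m)$, as a $\mathbb Z[a_*]$–linear combination of $D(m,0),\dots,D(m,i)$, giving Conjecture~\ref{Conjecture} with $\gamma=1$. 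The cases $i=0$ and $i=1$ can be checked by hand and confirm this shape; for $i=0$ one finds in particular that $D(m',0)(Q)$ divides $D(m'+1,0)\big((t-\theta)Q\big)$, which reflects that multiplication by $(t-\theta)$ cannot lower the degree of the gcd of the even and odd parts of $Q$ (of which $D(m',0)$ is, up to sign, the resultant).

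\emph{Step 3: the determinant identity (the main obstacle).} There remains the identity of Step 2 for all $i$. Here $D(m'+1,i)\big((t-\theta)Q\big)$ is, up to sign, the $(m'-i)$–th coefficient of the characteristic polynomial of the $m'\times m'$ matrix $t\mathcal A-\mathcal B$, where $\mathcal A_{k,l}=q_{2l-k}$ and $\mathcal B_{k,l}=q_{2l-k-1}$ with $q_s$ the coefficients of $Q$; the top–left $(m'-1)\times(m'-1)$ block of $\mathcal A$ is the matrix $\mathcal M_{nt}(Q,0,m'-1)$ whose characteristic–polynomial coefficients are the $D(m',j)(Q)$, and the last row and column of $t\mathcal A-\mathcal B$ are sparse (the last column has only its two bottom entries nonzero). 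The plan is to expand the characteristic polynomial of $t\mathcal A-\mathcal B$ along this sparse border and then to regroup the resulting minors — using proposition~\ref{DetIdent} — into the symmetric functions $D(m',j)(Q)$ with coefficients polynomial in $t$ and the $q_s$. I expect this to be the hard part: it is a combinatorial identity saying which minors of the bordered, $t$–weighted matrix reconstitute the symmetric functions of the unbordered block, and one must see that after cancellation only the $D(m',j)(Q)$ with $j\le i$ survive. The two cases proved in the paper, \thref{SarrafTheorem} and \thref{EhbauerTheorem}, should be precisely the regimes — $i$ small (so the relevant ideal is principal or has two generators) and, presumably, a small fixed value of the remaining parameters — in which this book–keeping is tractable, whereas the general case would require the full strength of the determinant identity.
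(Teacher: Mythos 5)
First, note what the target statement is: Conjecture~\ref{Conjecture} is exactly that — a conjecture. The paper does not prove it; it only proves the two special cases \thref{SarrafTheorem} ($H_{m-2,1,1}$) and \thref{EhbauerTheorem} ($H_{1,1,2}$), and cites from the earlier literature the cases $H_{0,j,1}$, $H_{i,0,n}$, $H_{i,n(k-i),n}$ and the reduction of the conjecture from general $n$ to $n=1$ (\cite[7.2.1]{LZ17}). Your proposal does not close this gap. Your Step 1 is correct and is essentially the known origin of the conjecture: for $q=2$ the entry \eqref{eq:MPolynomialEntry} is the coefficient of $\theta^{2j-i}$ in $(t-\theta)^nP(\theta)$, so $\mathcal M_{nt}(P,n,m+n-1)=\mathcal M_{nt}\bigl((t-\theta)^nP,0,m+n-1\bigr)$ and $\sum_j H_{i,j,n}(m)t^j$ is $D(m+n,i)$ evaluated at the coefficients of $(t-\theta)^nP$; this reformulation is implicit in \cite{GL16} and \cite{LZ17} and by itself proves nothing. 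Likewise, your iteration in Step 2 only re-derives the known reduction to the case of a single factor $(t-\theta)$, i.e.\ to $n=1$.

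The genuine gap is the identity you postulate in Step 2, namely
$D(m'+1,i)\bigl((t-\theta)Q\bigr)=\sum_{j\le i}c_{i,j}\,D(m',j)(Q)$ with polynomial coefficients $c_{i,j}$, for \emph{all} $i$. You never prove it — Step 3 explicitly defers it as ``the hard part'' and only sketches a hoped-for bordered-determinant expansion using Proposition~\ref{DetIdent} — and, worse, in this form it is almost certainly too strong: it would yield the conjecture with $\gamma=1$ for every $(i,j,n)$, whereas the paper states that, apparently, the listed results together with \thref{SarrafTheorem} and \thref{EhbauerTheorem} are the \emph{only} cases where $\gamma=1$ suffices, and that the case $\gamma>1$ ``seems to be much more complicated.'' So your plan, taken literally, would prove more than the authors believe to be true, and the one step that carries all the content (which minors of the bordered $t$-weighted matrix reassemble into $D(m',j)(Q)$ with $j\le i$, after cancellation) is exactly what is missing. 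In short: Steps 1 and 2 are a correct but known change of variables and bookkeeping; the conjecture itself — equivalently your Step 3 identity, presumably weakened to membership of a power of the left-hand side in $\langle D(m',0),\dots,D(m',i)\rangle$ — remains unproven, and the paper's actual contribution is limited to the two low-complexity instances where the expansion can be carried out by hand.
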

This conjecture immediately implies that Supp $X(2,n,m,l)$ does not depend on $n$ and is equal to Supp $X(m,l)$, although $X(q,n,m,l)$ and $X(m,l)$ are different as schemes (for example, multiplicities of their irreducible components depend on $n$). This simplifies greatly the study of $X(2,n,m,l)$. Moreover, Conjecture \ref{Conjecture} is used for a description of irreducible components of $X(m,l)$ in \cite{LZ17}. Finally, generalizations of Conjecture \ref{Conjecture} for the case $q>2$ can be used for the study of the behavior of $L(\mathfrak C^n_P,T)$ at $T=1$, see \cite{GL16} and \cite{LZ17} for details.
\\
\\
We give some results already achieved in \cite{GL16}:

\begin{enumerate}
\item We have
\begin{equation}
H_{0,j,1}(m) = \pm a_{j}D(m,0) \text{ for } j=0, \ldots, m \quad ,
\end{equation}
hence $X(m,1)=X(2,1,m,1)$ \ (\cite{GL16}, Theorem III; \cite{LZ17}, 10.5).
\item For any $m,n\ge 1$ and $i\in \{0, \ldots, k\}$ we have (\cite{GL16}, 9.12)
\begin{equation}
H_{i,0,n}(m)=\pm D(m,i-n)\pm a_0D(m,i-n+1)
\end{equation}
\begin{equation}
H_{i,n(k-i),n}(m)=\pm D(m,i-n)\pm a_mD(m,i-n+1)
\end{equation}
\item If Conjecture \ref{Conjecture} holds for $n=1$ then it holds for all $n$ \ (\cite{LZ17}, 7.2.1).
\end{enumerate}

Together with the results of the present paper, apparently these are for $n=1,2$ the only cases where $\gamma$ from Conjecture \ref{Conjecture} is 1. For the case $\gamma >1$ proof of Conjecture \ref{Conjecture} seems to be much more complicated.

\section{Formula for $H_{m-1,1,1}(m)$.}
For $n=1$ we have $k=m$. The matrix $\mathcal M(P,1,k)$ has the form
{
\begin{equation}
\left(\begin{matrix}
a_{1}t - a_0 &  a_{3}t-a_2&  a_5t-a_4 & \dots  & \ldots & 0 \\
a_{0}t&  a_{2}t-a_1&  a_4t-a_3 & \dots & \ldots & 0\\
0    &  a_{1}t-a_0&  a_3t-a_2 & \dots & \ldots & 0 \\
0    &  a_0t  &  a_2t-a_1 & \ldots  & \ldots & 0 \\
\dots & \dots & \dots  & \dots &  \ldots & \ldots   \\
0 & 0 & 0 & \dots  & \ldots & a_m\\
0 & 0 & 0 & \dots  & \ldots & a_mt-a_{m-1}
\end{matrix} \right)
\label{eq:MP1evenred}
\end{equation}}
\begin{mythm} \thlabel{SarrafTheorem}
\begin{equation}
H_{m-2,1,1}(m) = D(m,m-3)-D(m,m-2)^2.
\end{equation}
\end{mythm}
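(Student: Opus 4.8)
The plan is to evaluate $H_{m-2,1,1}(m)$ directly from its definition \eqref{CHarMnt} by means of trace identities. Write $\mathcal M:=\mathcal M_{nt}(P,1,m)$, the $m\times m$ matrix displayed in \eqref{eq:MP1evenred}, whose entries are $\mathcal M_{ij}=a_{2j-i}t-a_{2j-i-1}$ (with the convention $a_\ell=0$ for $\ell\notin\{0,\dots,m\}$), so that $\mathcal M=tA_1+A_0$ with $(A_1)_{ij}=a_{2j-i}$ and $(A_0)_{ij}=-a_{2j-i-1}$; write $B:=\mathcal M_{nt}(P,0,m-1)$, the $(m-1)\times(m-1)$ matrix with $B_{ij}=a_{2j-i}$. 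Since $\det(I-XT)=\sum_r(-1)^re_r(X)T^r$ with $e_r$ the sum of the $r\times r$ principal minors, comparison with \eqref{CHarMnt} gives $D(m,m-2)=-\operatorname{tr} B$, $D(m,m-3)=e_2(B)$, while $H_{m-2,1,1}(m)$ is the coefficient of $t$ in $e_2(\mathcal M)$. Expanding the $2\times 2$ principal minors of $tA_1+A_0$ and collecting the linear-in-$t$ part, the two copies of $\sum_p(A_1)_{pp}(A_0)_{pp}$ that arise cancel and one is left with
\[
H_{m-2,1,1}(m)=\operatorname{tr}(A_0)\operatorname{tr}(A_1)-\operatorname{tr}(A_0A_1),
\]
so the theorem becomes the trace identity $\operatorname{tr}(A_0)\operatorname{tr}(A_1)-\operatorname{tr}(A_0A_1)=e_2(B)-(\operatorname{tr} B)^2$.

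The diagonal traces are immediate: $\operatorname{tr} A_1=\sum_{i=1}^m a_i=\operatorname{tr} B+a_m$, $\operatorname{tr} A_0=-\sum_{i=0}^{m-1}a_i=-(\operatorname{tr} B+a_0)$, and $P(1)=\operatorname{tr} B+a_0+a_m$. Next I would compute $\operatorname{tr}(A_0A_1)=-\sum_{i,\ell=1}^m a_{2\ell-i-1}a_{2i-\ell}$ and $\operatorname{tr}(B^2)=\sum_{i,\ell=1}^{m-1}a_{2\ell-i}a_{2i-\ell}$, and evaluate each by the change of variables $(i,\ell)\mapsto(x,y)=(2i-\ell,\,2\ell-i-1)$, resp. $(x,y)=(2i-\ell,\,2\ell-i)$, which is a bijection of $\mathbb Z^2$ onto $\{(x,y):x-y\equiv 1\pmod 3\}$, resp. onto $\{(x,y):x\equiv y\pmod 3\}$. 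Enlarging the summation index from $\{1,\dots,m\}^2$ to all of $\mathbb Z^2$ changes nothing for $\operatorname{tr}(A_0A_1)$ — every pair $(i,\ell)$ outside the square lands on a product $a_xa_y$ with $x$ or $y$ outside $\{0,\dots,m\}$ — while for $\operatorname{tr}(B^2)$ it adds exactly the two corner terms $(i,\ell)=(0,0)$ and $(m,m)$, i.e. $a_0^2+a_m^2$. Setting $b_r:=\sum_{k\equiv r\,(3)}a_k$ and $\Sigma_r:=\sum_{x-y\equiv r\,(3)}a_xa_y$, so that $\Sigma_0=b_0^2+b_1^2+b_2^2$, $\Sigma_1=\Sigma_2=b_0b_1+b_1b_2+b_2b_0$ and $\Sigma_0+2\Sigma_1=P(1)^2$, this yields $\operatorname{tr}(A_0A_1)=-\Sigma_1$ and $\operatorname{tr}(B^2)=\Sigma_0-a_0^2-a_m^2$.

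Finally I would substitute these, together with $e_2(B)=\tfrac12\big((\operatorname{tr} B)^2-\operatorname{tr}(B^2)\big)$ and $P(1)=\operatorname{tr} B+a_0+a_m$, into both sides of the reduced identity; after expanding $P(1)^2=\Sigma_0+2\Sigma_1$ everything collapses to the common value $-\tfrac12(\operatorname{tr} B)^2-\tfrac12(b_0^2+b_1^2+b_2^2)+\tfrac12 a_0^2+\tfrac12 a_m^2$ (all terms lying in $\mathbb Z[a_0,\dots,a_m]$, so the intermediate use of $\tfrac12$ over $\mathbb Q$ is harmless). The main obstacle is the middle paragraph: carrying out the two changes of variables and, above all, getting the boundary corrections exactly right — that the class $x-y\equiv 1\pmod 3$ contributes no extra term whereas $x\equiv y\pmod 3$ contributes precisely $a_0^2+a_m^2$. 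This asymmetry between the two residue classes is what creates the extra summand $-D(m,m-2)^2$ beyond the "expected" value $D(m,m-3)$. An alternative is to use the $n=1$ instance of the cited identity $H_{0,j,1}(m)=\pm a_jD(m,0)$, i.e. $\det(tA_1+A_0)=\pm P(t)\det B$, and differentiate at $t=0$ via Jacobi's formula; but that route also requires $\operatorname{tr}(A_0A_1)$, so it is no shorter.
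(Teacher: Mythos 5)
Your proposal is correct, and I checked the two delicate boundary claims you flag: for $\operatorname{tr}(A_0A_1)$ the inverse map is $(i,\ell)=\bigl(\tfrac{2x+y+1}{3},\tfrac{x+2y+2}{3}\bigr)$, which lands in $\{1,\dots,m\}^2$ whenever $x,y\in\{0,\dots,m\}$, so enlarging the index set adds nothing, while for $\operatorname{tr}(B^2)$ the inverse map lands in $\{0,\dots,m\}^2$ and the only extra nonzero terms are indeed $(i,\ell)=(0,0)$ and $(m,m)$, giving $a_0^2+a_m^2$; the final substitution does collapse both sides to $-\tfrac12(\operatorname{tr}B)^2-\tfrac12\Sigma_0+\tfrac12 a_0^2+\tfrac12 a_m^2$. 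Your route is genuinely different in organization from the paper's: the paper also extracts the coefficient of $t$ in the sum of $2\times2$ principal minors, but then writes both sides as explicit double sums over pairs $j<k$ (the terms $A_1,\dots,A_4$ and $B_1,B_2,B_3$ around \eqref{eq:SarrafEq}), cancels $A_1=B_1+B_3$, and verifies the remainder monomial by monomial, with the residue of $r-s$ mod $3$ deciding which of $A_3$, $A_4$, $B_2$ contributes to $a_ra_s$; you instead compress everything into the single trace identity $\operatorname{tr}(A_0)\operatorname{tr}(A_1)-\operatorname{tr}(A_0A_1)=e_2(B)-(\operatorname{tr}B)^2$ via $e_2=\tfrac12(\operatorname{tr}^2-\operatorname{tr}(\cdot^2))$, and encode the mod-$3$ phenomenon once and for all in the residue-class sums $b_0,b_1,b_2$ and the relation $\Sigma_0+2\Sigma_1=P(1)^2$. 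What you gain is a shorter, more structural argument that makes visible why the correction $-D(m,m-2)^2$ arises (the asymmetry between the classes $x-y\equiv1$ and $x\equiv y$ at the corners); what the paper's version gains is a computation carried out entirely in $\mathbb{Z}[a_0,\dots,a_m]$ with no division by $2$ — your detour through $\mathbb{Q}$ is harmless, as you say, precisely because the asserted identity lives in $\mathbb{Z}[a_*]$, which embeds in $\mathbb{Q}[a_*]$. One cosmetic point: with the literal definition \eqref{CHarMnt} one gets $D(m,m-2)=-\operatorname{tr}B$, as you state, whereas the paper's proof uses the opposite sign convention; since this quantity only enters squared and $D(m,m-3)=e_2(B)$ under either convention, the discrepancy affects nothing, but it is worth a remark if you write this up.
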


\begin{proof}
For $U=T^{-1}$ we have
\begin{align*}
\det(\mathcal M(P,0,m)-U\cdot I_{m-1})  =
\left(\begin{matrix}
a_{1}-U&  a_{3}&  a_5 & \dots & 0 \\
a_{0}&  a_{2}-U&  a_4 & \dots  & 0\\
0    &  a_{1}&  a_3-U& \dots  & 0\\
0    &  a_0  &  a_2 & \ldots &  0 \\
\dots & \dots & \dots  & \dots  & \ldots  \\
0 & 0 & 0 & \dots &  a_m \\
0 & 0 & 0 & \dots &  a_{m-1}-U
\end{matrix} \right) \\
=D(m,0) + D(m,1)(-U)+ \ldots + D(m,m-1)(-U)^{m-1}
\end{align*}
This gives immediately
\begin{equation*}
D(m,m-2)=a_1+a_2+\ldots +a_{m-1}
\end{equation*}
\begin{equation}
D(m,m-3)=\sum_{1\le j < k \le m-1}
\begin{vmatrix}
a_j & a_{2k-j} \\
a_{2j-k} & a_k
\end{vmatrix}
= \sum_{1\le j < k \le m-1}a_j a_{k} -
a_{2j-k} a_{2k-j}
\end{equation}
It follows, that
\begin{equation}
D(m,m-2)^2=\sum_{1\le i \le m-1} a_i^2 + 2\sum_{1\le j < k \le m-1}a_ja_k
\end{equation}
On the other side we consider the determinant $\det(\mathcal M(P,1,m)- U\cdot I_m)$, which is equal to:
\begin{gather*}
{\footnotesize \begin{vmatrix}
a_{1}t - a_0-U &  a_{3}t-a_2&  a_5t-a_4 & \dots & \ldots & 0 \\
a_{0}t&  a_{2}t-a_1-U&  a_4t-a_3 & \dots &  \ldots & 0\\
0    &  a_{1}t-a_0&  a_3t-a_2-U & \dots  & \ldots & 0 \\
0    &  a_0t  &  a_2t-a_1 & \ldots  & \ldots & 0 \\
\dots & \dots & \dots   & \ldots & \ldots & \ldots   \\
0 & 0 & 0 & \dots  & \ldots & a_m\\
0 & 0 & 0 & \dots  & \ldots & a_mt-a_{m-1} -U
\end{vmatrix}}
\end{gather*}
The coefficient at $(-U)^{m-2}$  has the form:
$$\sum_{1\le j < k \le m}
\begin{vmatrix}
a_jt-a_{j-1} & a_{2k-j}t-a_{2k-j-1}\\
a_{2j-k}t-a_{2j-k-1} & a_kt-a_{k-1}
\end{vmatrix} $$
\begin{gather*}=  \sum_{1\le j < k \le m} (a_ja_k-a_{2j-k}a_{2k-j})t^2 \\
+\sum_{1\le j < k \le m}(-a_ja_{k-1}-a_{j-1}a_k+a_{2j-k}a_{2k-j-1}+a_{2j-k-1}a_{2k-j})t \\
+ \sum_{1\le j < k \le m}(a_{j-1}a_{k-1}-a_{2j-k-1}a_{2k-j-1})
\end{gather*}
We conclude
\begin{equation}
H_{m-2,1,1}=\sum_{1\le j < k \le m} (-a_ja_{k-1}-a_{j-1}a_k+a_{2j-k}a_{2k-j-1}+a_{2j-k-1}a_{2k-j})
\end{equation}
So, we must prove the identity
\begin{gather}
\sum_{1\le j < k \le m} (-a_ja_{k-1}-a_{j-1}a_k+a_{2j-k}a_{2k-j-1}+a_{2j-k-1}a_{2k-j}) = \nonumber \\
=\sum_{1\le j < k \le m-1}(a_j a_{k} -
a_{2j-k} a_{2k-j})
-\sum_{1\le i \le m-1} a_i^2 - 2\sum_{1\le j < k \le m-1}a_ja_k \label{Hm-211}
\end{gather}
The right hand side of \eqref{Hm-211} is equal to
\begin{equation}
-\sum_{1\le i \le m-1} a_i^2-\sum_{1\le j < k \le m-1}a_{2j-k} a_{2k-j}
 - \sum_{1\le j < k \le m-1}a_ja_k
\end{equation}
We denote by
{\small
\begin{gather}
A_1=\sum_{1\le j < k \le m} -a_ja_{k-1}, \;
A_2=\sum_{1\le j < k \le m} -a_{j-1}a_k, \;
A_3= \sum_{1\le j < k \le m} a_{2j-k}a_{2k-j-1}, \;
A_4= \sum_{1\le j < k \le m} a_{2j-k-1}a_{2k-j}, \nonumber \\
B_1=-\sum_{1\le i \le m-1} a_i^2, \;
B_2 = -\sum_{1\le j < k \le m-1}a_{2j-k} a_{2k-j}
 \quad \text{ and } \; B_3=-\sum_{1\le j < k \le m-1}a_ja_k
\end{gather}}
We must show that
$(A_1+A_2+A_3+A_4) - (B_1 + B_2 + B_3) = 0.$\\
Because we have $A_1=B_1+B_3$, it rests to show \begin{equation} \label{eq:SarrafEq}A_2+A_3+A_4-B_2=0.
\end{equation}
For any $r,s$ with $(0\le r \le s \le m)$ we shall show that the coefficient at $a_ra_s$ in \eqref{eq:SarrafEq} is $0$.
\begin{itemize}
\item[Case 1: $r=s$.] \ We look for monomials $a_ra_s$, where $r=s$.
\begin{enumerate}
\item In $A_2$ we have $a_{j-1}a_k \notin \{a_0^2, \ldots , a_m^2\}$ for $j<k$.
\item  The monomials $a_{2j-k}a_{2k-j-1}$ in $A_3$ are not of the form $\{a_0^2, \ldots , a_m^2\}$, because the equality $2j-k=2k-j-1$ or equivalently $3j=3k-1$ cannot hold.
\item Analogously, the monomials $a_{2j-k-1}a_{2k-j}$ in $A_4$ are not of the form $\{a_0^2, \ldots , a_m^2\}$, because the equality $2j-k-1=2k-j$ or equivalently $3j-1=3k$ cannot hold.
\item  The monomials $a_{2j-k}a_{2k-j}$ in $B_2$ are not of the form $\{a_0^2, \ldots , a_m^2\}$, since $2j-k=2k-j$ or equivalently $3j=3k$ is impossible for $j<k$.
\end{enumerate}
We get that for all $r=0,\ldots , m$ the terms $a_r^2$ enter in equation \eqref{eq:SarrafEq} with coefficient $0$.
\item[Case 2: $r<s$.] \ We consider the set of all monomials $\{a_ra_s\vert 0\le r < s \le m\}$ and find their coefficients in $A_2, A_3, A_4$ and $B_2$.
\begin{enumerate}
\item Terms of $A_2$: We have $j-1<k$, because of $j<k$. Hence we put $j-1=r$ and $k=s$. Therefore in $A_2$ only  monomials $$\{a_ra_s\vert 0\le r < s-1 \le m-1\}=\{a_ra_s\vert 0\le r \le m-2, r<s-1, 2\le s \le m\}$$ appear with coefficient $-1$.
\item Terms of $A_3$: We have $2j-k < 2k-j-1$, because $j<k$. Hence we put $2j-k=r$ and $2k-j-1 =s$.
We get $3k-3j = s+1-r$, consequently $r\equiv s+1 \mod 3$. Furthermore, $j\le k-1$ implies $r\le s-2$. Hence in $A_3$ we have the monomials
$$\{a_ra_s\vert 0\le r < s-1 \le m-1, r\equiv s+1 \mod 3\}
$$ with coefficient $1$.
\item Terms of $A_4$: We have $2j-k-1< 2k-j$, because $j<k$. Hence we put $2j-k-1=r$ and $2k-j=s$. This gives $3k-3j = s-r-1$. By $j<k$ we get $r<s-1$ and $r\equiv s+2 \mod 3$. Therefore all the monomials In $A_4$ are of the form
$$\{a_ra_s\vert 0\le r < s-1 \le m-1, r\equiv s+2 \mod 3\}
$$ and have coefficient $1$.
\item Terms of $B_2$: We have $2j-k < 2k-j$, because $j<k$. Hence we put $2j-k=r$ and $2k-j=s$. This gives $3k-3j = s - r$.
We see $r \equiv s \mod 3$ and $r < s-1$. Because of $2k-j =s\le m$ and $j < m$, we have $k<m$. Therefore all the monomials in $B_2$ are of the form
$$\{a_ra_s\vert 0\le r < s-1 \le m-1, r\equiv s \mod 3\}
.$$ and have coefficient $-1$.
\end{enumerate}
We see, that $A_2+A_3+A_4 - B_2 = 0$ holds by using the monomials $a_ra_s, (r<s)$:
\begin{gather*}
A_2+A_3+A_4-B_2 = \\
\sum_{0\le r < s-1 \le m-1}-a_ra_s
+\sum_{\substack{0\le r < s-1 \le m-1 \\ r\equiv s+1 \mod 3}} a_ra_s
+\sum_{\substack{0\le r < s-1 \le m-1 \\ r\equiv s+2 \mod 3}} a_ra_s - \left(-\sum_{\substack{0\le r < s-1 \le m-1 \\ r\equiv s \mod 3}}a_ra_s \right) = 0
\end{gather*}

\end{itemize}
\end{proof}
\section{Formula for $H_{1,1,2}$.}
For $q=2$ and $n=2$ we have $k=m+1$.
\begin{mythm} \thlabel{EhbauerTheorem}
$H_{1,1,2} = -2a_0^2 D(m,1) - 2 (a_0+a_1)D(m,0).$
\label{thmn2}
\end{mythm}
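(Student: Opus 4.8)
The plan is to realize $H_{1,1,2}(m)$ as a directional derivative of a determinant ``of $n=0$ type'' and then to reduce that derivative using the shift identities recalled in the introduction.

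\medskip
\textbf{Step 1: reduction to an $n=0$ determinant.}
The entry formula \eqref{eq:MPolynomialEntry} may be rewritten as $\mathcal M(P,n,\bar k)_{i,j}=[X^{jq-i}]\big(P(X)(t-X)^n\big)$, the coefficient of $X^{jq-i}$ in $P(X)(t-X)^n$. Hence for $q=2$, $n=2$ one has $\mathcal M_{nt}(P,2,m+1)=\mathcal M_{nt}(Q,0,m+1)$, where $Q(X):=P(X)(t-X)^2$ has degree $m+2$ over $\mathbb Z[a_0,\dots,a_m][t]$ (the two matrices are literally the same $(m+1)\times(m+1)$ truncation). Reading off the coefficient of $T^m$ in $\det(I_{m+1}-\mathcal M_{nt}(P,2,m+1)T)$ from \eqref{CHarMnt} and from the $\mathcal M_{nt}(Q,0,m+1)$ side gives
\[
\sum_{j} H_{1,j,2}(m)\,t^j=(-1)^m\,D(m+2,1)\!\left(P(X)(t-X)^2\right),
\]
where $D(m+2,1)(\,\cdot\,)$ is the polynomial of \eqref{CHarMnt} evaluated on the coefficients of the indicated degree-$(m+2)$ polynomial (equivalently, the sum of the $m\times m$ principal minors of the associated $\mathcal M_{nt}(\,\cdot\,,0,m+1)$). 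Since $P(X)(t-X)^2$ has $t^0$-part $X^2P(X)$ and $t^1$-part $-2XP(X)$, extracting the coefficient of $t^1$ is the directional derivative of $D(m+2,1)$ at $X^2P(X)$ in the direction $-2XP(X)$:
\[
H_{1,1,2}(m)=-2\,(-1)^m\,\frac{d}{d\epsilon}\Big|_{\epsilon=0}D(m+2,1)\!\left(X\,P(X)\,(X+\epsilon)\right).
\]
This already exhibits the factor $2$ and the vanishing of $H_{1,1,2}$ in characteristic $2$.

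\medskip
\textbf{Step 2: unwinding the derivative.}
Two auxiliary identities are used. (a) For $R$ of degree $M$,
\[
D(M+1,i)\big(X R(X)\big)=\pm D(M,i-1)(R)\ \pm\ R(0)\,D(M,i)(R),
\]
which is the $n=1$ instance of the second result recalled above (\cite[9.12]{GL16}), up to a sign coming from the homogeneity of $D$. (b) Specializing $t\mapsto s_0$ in $\mathcal M_{nt}(R,1,M)$ gives $\mathcal M_{nt}(R,1,M)|_{t=s_0}=\mathcal M_{nt}\big(R(X)(s_0-X),0,M\big)$, hence $D(M+1,i)\big(R(X)(s_0-X)\big)=\pm\sum_j H_{i,j,1}(M)(R)\,s_0^{\,j}$; for $i=0$ the first recalled result $H_{0,j,1}(M)=\pm a_j D(M,0)$ (with sign independent of $j$) collapses this to $\pm D(M,0)(R)\cdot R(s_0)$. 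Now apply (a) with $R(X)=P(X)(X+\epsilon)$, so $R(0)=a_0\epsilon$, and differentiate at $\epsilon=0$: the term $a_0\epsilon\,D(m+1,1)(R)$ contributes $\pm a_0\,D(m+1,1)(XP(X))=\pm a_0\big(\pm D(m,0)(P)\pm a_0 D(m,1)(P)\big)$ by (a) again, producing $a_0^2D(m,1)$ and a copy of $a_0D(m,0)$; the term $D(m+1,0)(R)$ contributes $\pm\frac{d}{d\epsilon}|_0 D(m+1,0)\big(P(X)(X+\epsilon)\big)=\pm a_1 D(m,0)(P)$ by (b) with $s_0=-\epsilon$.

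\medskip
\textbf{Step 3: collecting terms and fixing signs.}
Summing the three contributions gives $H_{1,1,2}(m)=-2a_0^2D(m,1)-2(a_0+a_1)D(m,0)$ once all the $\pm$'s above are matched. This sign-tracking is the only real difficulty, since the cited statements are given only up to sign: one can either propagate the signs through the two recalled results explicitly (checking in particular that the sign of $H_{0,j,1}(M)=\pm a_j D(M,0)$ is uniform in $j$, which is what legitimizes the factorization in (b)), or fix the three resulting signs at once by the base case $m=2$, where $\mathcal M_{nt}(P,2,3)$ is the explicit $3\times3$ matrix read from \eqref{eq:MPolynomialEntry}, $D(2,0)=a_1$, $D(2,1)=1$, and a direct expansion gives $H_{1,1,2}(2)=-2a_0^2-2a_0a_1-2a_1^2=-2a_0^2D(2,1)-2(a_0+a_1)D(2,0)$. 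An entirely elementary alternative, in the style of the proof of \thref{SarrafTheorem}, is to expand each side of the asserted identity as an explicit $\mathbb Z$-combination of cubic monomials $a_ra_sa_u$ and to match coefficients case by case according to the coincidences and sizes of $r,s,u$; this route avoids all sign subtleties at the cost of a longer case analysis.
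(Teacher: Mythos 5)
Your Step 1 is correct and is a genuinely different route from the paper's: the identification of $\mathcal M_{nt}(P,2,m+1)$ with the $n=0$ matrix of $Q(X)=P(X)(t-X)^2$, and the reading of $H_{1,1,2}$ as $-2$ times a directional derivative of $D(m+2,1)$, are sound, and the three contributions you extract ($a_0^2D(m,1)$, $a_0D(m,0)$, $a_1D(m,0)$) are exactly what the paper obtains from Lemmas \ref{Blu}--\ref{B1u} via the matrices $B(l,u)$, whose hardest point, equality \eqref{principaleq}, is settled there by the symmetrization identity of Proposition \ref{DetIdent}; you replace all of that by the two results quoted from \cite{GL16}. (Minor point: under the paper's definitions there is no $(-1)^m$ in your Step 1 display, since both sides are coefficients of the same $\det(I-\mathcal M T)$.)

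The genuine gap is Step 3, i.e.\ the sign bookkeeping, and it is not cosmetic. First, your intended check that the sign in $H_{0,j,1}(M)=\pm a_jD(M,0)$ is uniform in $j$ fails: already for $m=2$ one has
\begin{equation*}
\det \mathcal M_{nt}(P,1,2)=\det\begin{pmatrix} a_1t-a_0 & -a_2\\ a_0t & a_2t-a_1\end{pmatrix}=a_1\left(a_2t^2-a_1t+a_0\right),
\end{equation*}
so $H_{0,0,1}=a_0a_1$, $H_{0,1,1}=-a_1^2$, $H_{0,2,1}=a_1a_2$: the sign alternates with $j$. The collapse in your (b) therefore yields $\pm D(M,0)(R)\,R(-s_0)$ rather than $R(s_0)$, which flips the sign of the resulting $a_1D(m,0)$ contribution relative to your account --- precisely the term that separates the asserted formula from the wrong variant $-2a_0^2D(m,1)-2(a_0-a_1)D(m,0)$. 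Second, your fallback of fixing ``the three resulting signs at once by the base case $m=2$'' is logically insufficient: the $\pm$'s you accumulate include homogeneity factors of the type $(-1)^{m-i}$ (e.g.\ when passing from $P(X)(t-X)$ at $t=0$ to $XP(X)$, or from $P(X)(X+\epsilon)$ to the form $R(X)(s_0-X)$), and the $\pm$'s in the quoted identity $H_{i,0,1}(m)=\pm D(m,i-1)\pm a_0D(m,i)$ are unspecified in this paper; a verification at a single $m$ cannot exclude an $m$-dependent total sign. To close the argument you must either import the exact signed statements from \cite{GL16} or rederive your identities (a) and (b) with explicit signs and show that the combined sign in front of each of the three contributions equals $+1$ for every $m$; this is the real content your proposal defers. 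The ``elementary alternative'' of matching cubic monomials is only sketched and would be a separate proof, not a repair of this one.
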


We start with the matrix $\mathcal M(P,2,m+1)$, which has the form:
{
\begin{gather*}
\label{MP2mevenred}
\left(\begin{matrix}
a_{1}t^2 - 2a_0t &  a_{3}t^2-2a_2t+a_1&  a_5t^2-2a_4t+a_3 & \ldots & 0 \\
a_{0}t^2&  a_{2}t^2-2a_1t+a_0&  a_4t^2-2a_3t+a_2 & \ldots & 0 \\
0    &  a_{1}t^2 - 2a_0t &  a_{3}t^2-2a_2t+a_1& \ldots & 0 \\
0    &  a_0t^2  &  a_2t^2-2a_1t+a_0 & \ldots & 0 \\
\dots & \dots & \dots  & \ldots & \ldots   \\
0 & 0 & 0 & \ldots & 0\\
0 & 0 & 0 & \ldots & a_{m}  \\
0 & 0 & 0 &  \ldots & -2a_mt+a_{m-1}
\end{matrix} \right)
\end{gather*}}

For any square matrix $M$ of order $s\in \mathbb{N}$, let $M_l,$ $l\leq s,$ be  the submatrix of $M$ obtained by elimination of the $l$-th row and the $l$-th column and let $M_{i,j},$ $i\leq s, j\leq s,$ be the submatrix of $M$ obtained by elimination of the $i$-th row and the $j$-th column.
\\

For $l\in \{1,\ldots, m+1\}$ we consider the submatrix $\mathcal M(P,2,m+1)_l$ of $\mathcal M(P,2,m+1)$ to define the matrix $B(l,u), u \in \{1,\ldots, m\}$ as follows:
\\

The $c$-th column of $B(l,u)$ is the column of the $t$-free entries of the $c$-th column of $\mathcal M(P,2,m+1)_l,$ if $c\neq u,$ and $(-\frac{1}{2})$ $\cdot$ (column of the coefficients at $t$ of the $u$-th column of $\mathcal M(P,2,m+1)_l$), if $c = u$.
\\

The matrix $B(l,u)$ can also be obtained in the following way:
\\

There are 3 matrices ${\mathcal R}_0(P,2,m+1), {\mathcal R}_1(P,2,m+1), {\mathcal R}_2(P,2,m+1) \in M(k\times k,\mathbb{Z}[a_0,\ldots, a_m])$ such that $$\mathcal M(P,2,m+1) = {\mathcal R}_2(P,2,m+1)\cdot t^2 -2\cdot{\mathcal R}_1(P,2,m+1)\cdot t + {\mathcal R}_0(P,2,m+1).$$
Hence we have $$\mathcal M(P,2,m+1)_l = {\mathcal R}_2(P,2,m+1)_l\cdot t^2 -2\cdot{\mathcal R}_1(P,2,m+1)_l\cdot t + {\mathcal R}_0(P,2,m+1)_l.$$
Then for $c\neq u$ the $c$-th column of $B(l,u)$ is the $c$-th column of ${\mathcal R}_0(P,2,m+1)_l$ and the $u$-th column of $B(l,u)$ is the $u$-th column of ${\mathcal R}_1(P,2,m+1_l)$. If we denote $B(l,u) = (b_{i,j})_{\substack{1\leq i \leq m \\ 1 \leq j \leq m}},$ ${\mathcal R}_0(P,2,m+1)_l= (v_{i,j})_{\substack{1\leq i \leq m \\ 1 \leq j \leq m}}$ and ${\mathcal R}_1(P,2,m+1)_l =(s_{i,j})_{\substack{1\leq i \leq m \\ 1 \leq j \leq m}},$ we have
$b_{i,j} =
v_{i,j}$ for $j\neq u$ and $b_{i,u} = s_{i,u}.$
\begin{framed}
\begin{mytable}\label{table}
We give a detailed illustration by the example $m=3$.
{
\begin{itemize}
\item[]$\mathcal M(P,2,4)=
\left(\begin{matrix}
a_1t^2-2a_0t & a_3t^2-2a_2t+a_1 & a_3 & 0  \\
a_0t^2       & a_2t^2-2a_1t+a_0 & -2a_3t+a_2 & 0  \\
0& a_1t^2-2a_0t & a_3t^2-2a_2t+a_1 & a_3  \\
0& a_0t^2       & a_2t^2-2a_1t+a_0 & -2a_3t+a_2
\end{matrix}\right)$
\item[] $\mathcal M(P,2,4)_1=
\left(\begin{matrix}
a_2t^2-2a_1t+a_0 & -2a_3t+a_2 & 0  \\
a_1t^2-2a_0t & a_3t^2-2a_2t+a_1 & a_3  \\
a_0t^2       & a_2t^2-2a_1t+a_0 & -2a_3t+a_2
\end{matrix}\right)$
\item[]
$B(1,1)=
\left(\begin{matrix}
a_1 & a_2 & 0  \\
a_0 & a_1 & a_3  \\
0       & a_0 & a_2
\end{matrix}\right), \quad
B(1,2)=
\left(\begin{matrix}
a_0 & a_3 & 0  \\
0 & a_2 & a_3  \\
0  & a_1 & a_2
\end{matrix}\right), \quad
B(1,3)=
\left(\begin{matrix}
a_0 & a_2 & 0  \\
0 & a_1 & 0  \\
0       & a_0 & a_3
\end{matrix}\right), \quad$
\item[] $\mathcal M(P,2,4)_2=
\left(\begin{matrix}
a_1t^2-2a_0t & a_3 & 0  \\
0&  a_3t^2-2a_2t+a_1 & a_3  \\
0&  a_2t^2-2a_1t+a_0 & -2a_3t+a_2
\end{matrix}\right)$
\item[]
$B(2,1)=
\left(\begin{matrix}
a_0 & a_3 & 0  \\
0 & a_1 & a_3  \\
0       & a_0 & a_2
\end{matrix}\right), \quad
B(2,2)=
\left(\begin{matrix}
0 & 0 & 0  \\
0 & a_2 & a_3  \\
0  & a_1 & a_2
\end{matrix}\right), \quad
B(2,3)=
\left(\begin{matrix}
0 & a_3 & 0  \\
0 & a_1 & 0  \\
0       & a_0 & a_3
\end{matrix}\right), \quad$
\item[] $\mathcal M(P,2,4)_3=
\left(\begin{matrix}
a_1t^2-2a_0t & a_3t^2-2a_2t+a_1 & 0  \\
a_0t^2       & a_2t^2-2a_1t+a_0 & 0  \\
0& a_0t^2       &-2a_3t+a_2
\end{matrix}\right)$
\item[]
$B(3,1)=
\left(\begin{matrix}
a_0 & a_1 & 0  \\
0 & a_0 & a_3  \\
0       & 0 & a_2
\end{matrix}\right), \quad
B(3,2)=
\left(\begin{matrix}
0 & a_2 & 0  \\
0 & a_1 & 0  \\
0  & 0 & a_2
\end{matrix}\right), \quad
B(3,3)=
\left(\begin{matrix}
0 & a_1 & 0  \\
0 & a_0 & 0  \\
0       & 0 & a_3
\end{matrix}\right), \quad$
\item[]
$\mathcal M(P,2,4)_4=
\left(\begin{matrix}
a_1t^2-2a_0t & a_3t^2-2a_2t+a_1 & a_3   \\
a_0t^2       & a_2t^2-2a_1t+a_0 & -2a_3t+a_2  \\
0& a_1t^2-2a_0t & a_3t^2-2a_2t+a_1
\end{matrix}\right)$
\item[]
$B(4,1)=
\left(\begin{matrix}
a_0 & a_1 & a_3  \\
0 & a_0 & a_2  \\
0       & 0 & a_1
\end{matrix}\right), \quad
B(4,2)=
\left(\begin{matrix}
0 & a_2 & a_3 \\
0 & a_1 & a_2  \\
0  & a_0 & a_1
\end{matrix}\right), \quad
B(4,3)=
\left(\begin{matrix}
0 & a_1 & 0  \\
0 & a_0 & a_3  \\
0       & 0 & a_2
\end{matrix}\right), \quad$
\end{itemize}}
\end{mytable}
\end{framed}
\begin{myproposition}
\label{PropH112}
$$H_{1,1,2}= -2\cdot \sum_{\substack{l=1,\ldots, m+1 \\ u = \{1,\ldots ,m\}}} \det B(l,u). $$
\end{myproposition}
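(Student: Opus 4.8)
The plan is to extract $H_{1,1,2}$ exactly as $H_{m-2,1,1}$ is extracted in the proof of \thref{SarrafTheorem}, and then to reorganise the determinant that arises by means of the decomposition $\mathcal M(P,2,m+1)=\mathcal R_2(P,2,m+1)\,t^2-2\,\mathcal R_1(P,2,m+1)\,t+\mathcal R_0(P,2,m+1)$ recalled before the statement. First I would pass, as in that proof, to the variable $U=T^{-1}$, so that $\sum_{j}H_{1,j,2}(m)\,t^j$ becomes the coefficient of $(-U)^{1}$ in $\det\big(\mathcal M(P,2,m+1)-U\cdot I_{m+1}\big)$. By the standard columnwise expansion of a characteristic polynomial, this coefficient is the sum of all $m\times m$ principal minors of $\mathcal M(P,2,m+1)$; since those minors are indexed by the deleted index $l$, this reads
\[
\sum_{j}H_{1,j,2}(m)\,t^j=\sum_{l=1}^{m+1}\det\mathcal M(P,2,m+1)_l .
\]
Comparing the coefficients of $t^1$ on the two sides, it then remains to show that the coefficient of $t^1$ in $\det\mathcal M(P,2,m+1)_l$ equals $-2\sum_{u=1}^{m}\det B(l,u)$ for every $l$.

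For this I would use $\mathcal M(P,2,m+1)_l=\mathcal R_2(P,2,m+1)_l\,t^2-2\,\mathcal R_1(P,2,m+1)_l\,t+\mathcal R_0(P,2,m+1)_l$ and expand $\det\mathcal M(P,2,m+1)_l$ by multilinearity in its $m$ columns, writing each column as the sum of its $t^2$-part, its $t$-part and its $t$-free part. A summand in which the $c$-th column is taken in its $t^{\varepsilon_c}$-part, $\varepsilon_c\in\{0,1,2\}$, carries the factor $t^{\varepsilon_1+\dots+\varepsilon_m}$; since every $\varepsilon_c\ge 0$, the coefficient of $t^1$ only receives contributions from the summands in which exactly one column, say the $u$-th, is taken in its $t$-part and all the others in their $t$-free part. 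In such a summand one pulls the scalar $-2$ out of the $u$-th column; what remains is the determinant of the matrix whose columns coincide with those of $\mathcal R_0(P,2,m+1)_l$ except for the $u$-th column, which is the $u$-th column of $\mathcal R_1(P,2,m+1)_l$ --- and that matrix is exactly $B(l,u)$ by its definition. Summing over $u$ gives the coefficient of $t^1$ in $\det\mathcal M(P,2,m+1)_l$ as $-2\sum_{u=1}^{m}\det B(l,u)$, and summing over $l$ yields the asserted identity.

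I expect the only genuine obstacle to be the sign and normalisation bookkeeping: one must keep the convention used for $CH$, and for the passage from $\det(I-\mathcal M\,T)$ to $\det(\mathcal M-U\,I)$, consistent with the proof of \thref{SarrafTheorem}, and one must correctly pair the $-2$ in front of $\mathcal R_1(P,2,m+1)$ with the $-\tfrac12$ built into the definition of $B(l,u)$. Once those are pinned down, everything else is just multilinearity of the determinant, together with the obvious remark that a column taken in its $t^2$-part can never contribute to the coefficient of $t^1$. The worked case $m=3$ of Table \ref{table}, in which the matrices $\mathcal M(P,2,m+1)_l$ and $B(l,u)$ are written out explicitly, provides a convenient consistency check on the final formula.
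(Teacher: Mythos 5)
Your proposal is correct and is essentially the paper's own one-line proof written out in full: the coefficient of $T^{m}$ in $\det\bigl(I_{m+1}-\mathcal M(P,2,m+1)\,T\bigr)$ is, up to sign, the sum of the principal $m\times m$ minors $\det\mathcal M(P,2,m+1)_l$, and column-wise multilinearity in $t$ turns the $t^{1}$-coefficient of each such minor into $-2\sum_{u}\det B(l,u)$, exactly as you argue. The sign bookkeeping you flag is real but is a convention issue already present in the paper itself (the literal definition of $CH$ introduces a factor $(-1)^{m}$, just as in the identification of $\det(\mathcal M(P,0,m)-U\cdot I)$ with $\sum_i D(m,i)(-U)^i$ in the proof of \thref{SarrafTheorem}), so it does not reflect a gap in your argument.
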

\begin{proof}
The definition of $H_{1,1,2}$ and the construction of $B(l,u)$ give the identity.
\end{proof}
\begin{mylemma} \label{Blu}
If $l\neq 1$ and $u\neq 1$, then $\det B(l,u) = 0$.
\end{mylemma}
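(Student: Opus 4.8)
The plan is to exploit a degeneracy in the $t$-free part ${\mathcal R}_0(P,2,m+1)$ of $\mathcal M(P,2,m+1)$. First I would write out the $(i,j)$-entry of $\mathcal M(P,2,m+1)$ explicitly from \eqref{eq:MPolynomialEntry} with $q=2$, $n=2$, namely
$$\mathcal M(P,2,m+1)_{i,j}=a_{2j-i}\,t^2-2a_{2j-i-1}\,t+a_{2j-i-2},$$
so that $\bigl({\mathcal R}_0(P,2,m+1)\bigr)_{i,j}=a_{2j-i-2}$ and $\bigl({\mathcal R}_1(P,2,m+1)\bigr)_{i,j}=a_{2j-i-1}$.

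The key observation is that the \emph{first column} of ${\mathcal R}_0(P,2,m+1)$ is identically zero: for $j=1$ we get $\bigl({\mathcal R}_0(P,2,m+1)\bigr)_{i,1}=a_{-i}$, and since $i$ runs over $1,\ldots,m+1$ we have $-i<0$, hence $a_{-i}=0$ by the convention that $a_l=0$ for $l\notin\{0,\ldots,m\}$. This is visible in Table~\ref{table}: the first column of each $\mathcal M(P,2,4)_l$ carries no $t$-free term.

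Now suppose $l\neq 1$ and $u\neq 1$. Deleting the $l$-th row and $l$-th column of $\mathcal M(P,2,m+1)$ does not disturb the first column (because $l\neq 1$, the first column of the reduced matrix is still the first column of the original one, with its $l$-th entry removed); hence the first column of ${\mathcal R}_0(P,2,m+1)_l$ is again identically zero. By the construction of $B(l,u)$, all columns of $B(l,u)$ except the $u$-th are taken verbatim from ${\mathcal R}_0(P,2,m+1)_l$, and since $u\neq 1$ the first column of $B(l,u)$ equals the first column of ${\mathcal R}_0(P,2,m+1)_l$, which is zero. A matrix with a zero column has zero determinant, so $\det B(l,u)=0$.

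There is no genuine obstacle; the only points requiring care are the re-indexing of columns after deleting column $l$ — the first column remains the first column precisely because $l\neq1$ — and the bookkeeping of the convention $a_l=0$ outside $\{0,\ldots,m\}$, which is exactly what forces the first column of ${\mathcal R}_0(P,2,m+1)$ to vanish.
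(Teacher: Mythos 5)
Your proof is correct and follows essentially the same route as the paper: both arguments observe that the $t$-free (degree-$0$) part of the first column of $\mathcal M(P,2,m+1)$ vanishes because of the convention $a_l=0$ for $l\notin\{0,\ldots,m\}$, that deleting row and column $l\neq 1$ preserves this property, and that for $u\neq 1$ the first column of $B(l,u)$ is taken from this $t$-free part, giving a zero column and hence zero determinant. Your explicit decomposition into ${\mathcal R}_0$ and ${\mathcal R}_1$ is just a slightly more formal phrasing of the same bookkeeping.
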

\begin{proof}
If $l\neq 1$ and $u\neq 1$, then all entries of the first column of $B(l,u)$ are $0$.
Indeed the entries of the first column of $\mathcal M(P,2,m+1)$ have no degree $0$ term in $t$. These entries are $a_{2 - i}t^2 - 2a_{1 -i }t+a_{-i},$ where $i\in \{1, \ldots , m+1\}$ is the row-index and  $a_{-i}=0$ for $i\in \{1, \ldots , m+1\}$.
Therefore,  if $l\neq 1$, then the entries of the first column of the submatrix $\mathcal M(P,2,m+1)_l$ have no degree $0$ term in $t$. Consequently by construction the entries of the first column of $B(l,u)$ are $0$ for $l\neq 1, u\neq 1.$
\end{proof}
The above table \ref{table} illustrates this lemma.
\begin{mylemma} \label{B21}
We have $\det B(2,1) = a_0\cdot D(m,0)$.
\end{mylemma}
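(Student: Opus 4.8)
The plan is to compute the matrix $B(2,1)$ entrywise and then expand its determinant along the first column. By \eqref{eq:MPolynomialEntry} with $n=q=2$, the $(i,j)$-entry of $\mathcal M(P,2,m+1)$ is $a_{2j-i}t^2-2a_{2j-i-1}t+a_{2j-i-2}$, so ${\mathcal R}_0(P,2,m+1)$ has $(i,j)$-entry $a_{2j-i-2}$ and ${\mathcal R}_1(P,2,m+1)$ has $(i,j)$-entry $a_{2j-i-1}$, with the convention $a_\ell=0$ for $\ell\notin\{0,\dots,m\}$.

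First I would unwind the deletion of the second row and column: in $\mathcal M(P,2,m+1)_2$ the new row index $i$ corresponds to old row $1$ if $i=1$ and to old row $i+1$ if $i\ge 2$, and likewise for columns. By construction the first column of $B(2,1)$ is $(-\tfrac12)$ times the column of coefficients at $t$ of the first column of $\mathcal M(P,2,m+1)_2$; from the formula above its $i$-th entry is $a_{1-(\text{old row of }i)}$, which equals $a_0$ for $i=1$ and $a_{-i}=0$ for $i\ge 2$, so the first column of $B(2,1)$ is $(a_0,0,\dots,0)^{t}$. For the remaining columns, the entry of $B(2,1)$ in new position $(i,j)$ with $j\ge 2$ is the $t$-free part of $\mathcal M(P,2,m+1)$ at old position $(\text{old row of }i,\,j+1)$; for $i\ge 2$ this equals $a_{2(j+1)-(i+1)-2}=a_{2(j-1)-(i-1)}$, which after the shift $i\mapsto i-1$, $j\mapsto j-1$ is precisely the $(i,j)$-entry $a_{2j-i}$ of the $(m-1)\times(m-1)$ matrix $\mathcal M(P,0,m)$ appearing in the proof of \thref{SarrafTheorem}. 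Hence the lower-right $(m-1)\times(m-1)$ block of $B(2,1)$ equals $\mathcal M(P,0,m)$.

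Expanding $\det B(2,1)$ along the first column $(a_0,0,\dots,0)^{t}$ then gives $\det B(2,1)=a_0\cdot\det\mathcal M(P,0,m)$, and $\det\mathcal M(P,0,m)=D(m,0)$ by setting $U=0$ in the expansion of $\det(\mathcal M(P,0,m)-U\cdot I_{m-1})$ recorded in the proof of \thref{SarrafTheorem}; this yields $\det B(2,1)=a_0\,D(m,0)$. I do not expect a genuine obstacle here: the only care needed is the re-indexing after the deletion and the "$a_\ell=0$ outside $\{0,\dots,m\}$" convention, and the case $m=3$ in Table \ref{table} provides a direct check, $\det B(2,1)=a_0(a_1a_2-a_0a_3)=a_0\,D(3,0)$.
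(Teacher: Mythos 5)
Your proposal is correct and follows essentially the same route as the paper: identify the first column of $B(2,1)$ as $(a_0,0,\dots,0)^{t}$, recognize the minor $B(2,1)_1$ as the matrix $\mathcal M_{nt}(P,0,m)$ with entries $a_{2j-i}$ whose determinant is $D(m,0)$, and expand along the first column. The paper just exhibits $B(2,1)$ explicitly rather than deriving its entries from \eqref{eq:MPolynomialEntry}, so your version merely spells out the re-indexing that the paper leaves implicit.
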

\begin{proof}
The matrix $B(2,1)$ has the form:
$$B(2,1)=\left(
\begin{matrix}
a_0 & a_3 & a_5 & a_7 &  \ldots & 0 & 0 & 0 \\
0   & a_1 & a_3 & a_5 &  \ldots& 0 & 0 & 0  \\
0   & a_0 & a_2 & a_4 &  \ldots & 0 & 0 & 0 \\
0& 0   &  a_1&a_3 &  \ldots & 0& 0 & 0 \\
0& 0   & a_0&a_2 &  \ldots&0&  0 & 0\\
\ldots   & \ldots   &   \ldots  &\ldots  & \ldots & \ldots & \ldots & \ldots \\
0   & 0   &  0  &  0  & \ldots&a_{m-3}  & a_{m-1}  &0  \\
0& 0& 0& 0&  \ldots & a_{m-4} & a_{m-2} & a_{m}\\
0   & 0   &  0  &  0  & \ldots&a_{m-5} & a_{m-3} & a_{m-1}  \\
\end{matrix}
 \right)
 $$
As the submatrix $B(2,1)_{1}$ is equal to the matrix $\mathcal M_{nt}(P,0,m),$ this gives
$$\det B(2,1) = a_0 D(m,0)$$
\end{proof}

\begin{mylemma} \label{LemmaBl1}
$$\sum_{l=3}^{m+1}\det B(l,1)=a_0^2D(m,1).$$
\end{mylemma}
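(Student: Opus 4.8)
\emph{Proof plan.} The plan is to write each $\det B(l,1)$ with $l\ge 3$ as $a_0$ times a principal minor of one fixed $m\times m$ matrix $N$, and then to recover $\sum_{l=3}^{m+1}\det B(l,1)$ from a single coefficient of the characteristic polynomial of $N$, which splits off a linear factor $a_0-U$.

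First I would observe that for every $l\ge 2$ the first column of $B(l,1)$ equals $(a_0,0,\dots,0)^{t}$. Indeed, in the decomposition $\mathcal M(P,2,m+1)={\mathcal R}_2(P,2,m+1)\cdot t^2-2\cdot{\mathcal R}_1(P,2,m+1)\cdot t+{\mathcal R}_0(P,2,m+1)$ the first column of $B(l,1)$ is the first column of ${\mathcal R}_1(P,2,m+1)_l$, and the $(i,1)$-entry of ${\mathcal R}_1(P,2,m+1)$ is $a_{1-i}$, which is $a_0$ for $i=1$ and $0$ for $i\ge 2$; deleting the $l$-th row with $l\ne 1$ keeps the top entry. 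Expanding the determinant along this column gives $\det B(l,1)=a_0\cdot\det B(l,1)_{1,1}$ for all $l\ge 2$.

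Next I would identify the minor $B(l,1)_{1,1}$. Its columns other than the first come from the $t$-free part ${\mathcal R}_0(P,2,m+1)$, whose $(i,j)$-entry is $a_{2j-i-2}$; concretely $B(l,1)_{1,1}$ is the submatrix of ${\mathcal R}_0(P,2,m+1)$ on the rows and columns indexed by $\{2,\dots,m+1\}\setminus\{l\}$. Introducing the $m\times m$ matrix $N:=(a_{2j-i-1})_{1\le i,j\le m}$ and shifting both indices down by one identifies this submatrix with $N_{l-1}$, the matrix obtained from $N$ by deleting its $(l-1)$-th row and column; thus $\det B(l,1)=a_0\cdot\det N_{l-1}$ for $l\ge 2$. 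Two features of $N$ are then decisive: its $(i,1)$-entry is $a_{1-i}$, so its first column is again $(a_0,0,\dots,0)^{t}$; and $N_1$ (delete the first row and column of $N$) equals, after the analogous index shift, the $(m-1)\times(m-1)$ matrix $\mathcal M_{nt}(P,0,m)=(a_{2j-i})_{1\le i,j\le m-1}$, for which the computation in the proof of \thref{SarrafTheorem} gives $\det(\mathcal M_{nt}(P,0,m)-U\cdot I_{m-1})=\sum_{i=0}^{m-1}D(m,i)(-U)^{i}$.

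Finally I would expand $\det(N-U\cdot I_m)$ along its first column, which is $(a_0-U,0,\dots,0)^{t}$, getting $\det(N-U\cdot I_m)=(a_0-U)\det(N_1-U\cdot I_{m-1})=(a_0-U)\sum_{i=0}^{m-1}D(m,i)(-U)^{i}$, whose coefficient at $(-U)^{1}$ is $a_0D(m,1)+D(m,0)$. On the other hand that coefficient equals the sum of the principal $(m-1)$-minors of $N$, namely $\sum_{c=1}^{m}\det N_c$, and its $c=1$ summand is $\det N_1=D(m,0)$ (put $U=0$ above). Hence $\sum_{c=2}^{m}\det N_c=a_0D(m,1)$, and therefore
\[
\sum_{l=3}^{m+1}\det B(l,1)=a_0\sum_{l=3}^{m+1}\det N_{l-1}=a_0\sum_{c=2}^{m}\det N_c=a_0^{2}D(m,1).
\]
The step needing the most care is the identification $B(l,1)_{1,1}=N_{l-1}$, i.e.\ getting the two index shifts right and checking that $N_1$ is exactly $\mathcal M_{nt}(P,0,m)$; everything afterwards is the routine characteristic-polynomial bookkeeping above. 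I would deliberately avoid evaluating the individual $\det B(l,1)$, since those sums only collapse once they are reassembled into precisely this characteristic polynomial.
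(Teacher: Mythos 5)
Your proof is correct, and at its core it rests on the same two observations as the paper's argument: the sparse first column(s) of $B(l,1)$ allow factors of $a_0$ to be pulled out by Laplace expansion, and the remaining minors are principal minors of $\mathcal M_{nt}(P,0,m)$, whose sum is by definition the coefficient $D(m,1)$ of its characteristic polynomial. The packaging differs, though. The paper expands along the first \emph{two} columns at once, obtaining $\det B(l,1)=a_0^2\det \mathcal M_{nt}(P,0,m)_{l-2}$ for each $l\ge 3$, and then sums these principal minors to get $D(m,1)$. You expand only along the first column, pass to the auxiliary $m\times m$ matrix $N=(a_{2j-i-1})$ with $B(l,1)_{1,1}=N_{l-1}$ (this identification, which you rightly flag as the delicate step, is correct), and obtain the total $\sum_{c=2}^{m}\det N_c=a_0D(m,1)$ from the factorization $\det(N-U I_m)=(a_0-U)\det\bigl(\mathcal M_{nt}(P,0,m)-U I_{m-1}\bigr)$ instead of evaluating the individual summands; your sign convention $\det(\mathcal M_{nt}(P,0,m)-U I_{m-1})=\sum_i D(m,i)(-U)^i$ is the one the paper itself uses in the proof of \thref{SarrafTheorem}, so everything is consistent. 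The two routes are equivalent --- for $l\ge 3$ the first column of $N_{l-1}$ is still $(a_0,0,\dots,0)^t$, so one further expansion in your setup reproduces the paper's term-by-term formula --- but your version buys a small bonus: the $c=1$ term of your identity is $\det N_1=D(m,0)$, so the same computation simultaneously recovers Lemma \ref{B21}, $\det B(2,1)=a_0D(m,0)$.
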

\begin{proof}
Let $l\ge 3.$ The matrix $B(l,1)$ has the form:
\\
$$B(l,1)=
\begin{array}{c}
	\begin{array}{ccccccccc}
	&&&\multicolumn{4}{c}{\text{$l$-th column eliminated}} &&
	\end{array}
	\\
	\left( \begin{array}{cc|ccc|cccc}
a_0 & a_1 & a_3 & a_5  & \ldots &\ldots& 0 & 0 & 0 \\
0   & a_0 & a_2 & a_4 & \ldots&\ldots& 0 & 0 & 0  \\ \hline
0   & 0 & a_1 & a_3 &  \ldots &\ldots & 0 & 0 & 0 \\
0& 0   &  a_0&a_2 & \ldots& \ldots & 0& 0 & 0 \\
0& 0   & 0&a_1 & \ldots&\ldots&0&  0 & 0\\
\ldots   & \ldots  & \ldots  & \ldots &\ldots & \ldots & \ldots & \ldots & \ldots
\\
\hline
\multicolumn{4}{c}{\text{$l$-th row eliminated}}&&&&&
\\
\ldots   & \ldots  & \ldots  & \ldots  & \ldots & \ldots & \ldots & \ldots & \ldots
\\
0   & 0   &  0  &  0  &  \ldots&\ldots &a_{m-3}  & a_{m-1}  &0  \\
0& 0& 0& 0&  \ldots & \ldots & a_{m-4} & a_{m-2} & a_{m}\\
0   & 0   &  0  &  0  & \ldots & \ldots &a_{m-5} & a_{m-3} & a_{m-1}  \\
\end{array}
\right)
\end{array}$$
The submatrix $(B(l,1)_1)_1$ obtained by elimination of the first two rows and the first two columns of $B(l,1)$ is equal to the submatrix $\mathcal M_{nt}(P,0,m)_{l-2}$:
$$\mathcal M_{nt}(P,0,m)_{l-2}=
\begin{array}{c}
\begin{array}{ccccccc}
&\multicolumn{4}{c}{\text{$(l-2)$-th column eliminated}} &&
\end{array}\\
\left(
\begin{array}{ccc|cccc}
a_1 & a_3 &  \ldots &\ldots & 0 & 0 & 0 \\
a_0&a_2 & \ldots& \ldots & 0& 0 & 0 \\
0&a_1 & \ldots&\ldots&0&  0 & 0\\
0 & a_0 & \ldots & \ldots & 0 &0 & 0\\
\ldots   &   \ldots &\ldots & \ldots & \ldots & \ldots & \ldots
\\
\hline
\multicolumn{4}{c}{\text{$(l-2)$-th row eliminated}}&&&
\\
\ldots  &  \ldots  &   \ldots & \ldots & \ldots & \ldots & \ldots
\\
0  &  0  &  \ldots&\ldots &a_{m-3}  & a_{m-1}  &0  \\
0& 0&  \ldots & \ldots & a_{m-4} & a_{m-2} & a_{m}\\
0  &  0  &  \ldots & \ldots &a_{m-5} & a_{m-3} & a_{m-1}  \\
\end{array}
\right)
\end{array}$$
The minors $\det \mathcal M_{nt}(P,0,m)_l$ are exactly the summands of the coefficient of the term of degree 1 in $(-U)$ of $\det (\mathcal M_{nt}(P,0,m)-I_mU)$. This results in
$$\sum_{3\leq l \leq m+1} \det \mathcal M_{nt}(P,0,m)_{l-2} = D(m,1)$$
Expanding the determinant of $B(l,1)$ along the first two columns gives:
$$\det B(l,1) = a_0^2\cdot \det \mathcal M_{nt}(P,0,m)_{l-2}$$
We conclude
$$\sum_{l=3}^{m+1} \det B(l,1) = \sum_{l=3}^{m+1} a_0^2\cdot \det \mathcal M_{nt}(P,0,m)_{l-2} = a_0^2\cdot D(m,1)$$
\end{proof}

\begin{mylemma} \label{B11}
$$\det B(1,1) = a_1 \cdot D(m,0) -a_0\det B(1,1)_{2,1}.$$
\end{mylemma}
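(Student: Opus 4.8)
The plan is to expand $\det B(1,1)$ along its first column, in the same spirit as the proofs of Lemmas \ref{B21} and \ref{LemmaBl1}. First I would write down the explicit shape of $B(1,1)$. Deleting the first row and the first column of $\mathcal M(P,2,m+1)$ and reindexing so that rows and columns run over $\{1,\dots,m\}$, the $(i,j)$-entry of $\mathcal M(P,2,m+1)_1$ becomes $a_{2j-i+1}t^2-2a_{2j-i}t+a_{2j-i-1}$. Hence, by the construction of $B(l,u)$, the first column of $B(1,1)$ equals $(a_{2-i})_{1\le i\le m}=(a_1,a_0,0,\dots,0)^{t}$, while for each $j\ge 2$ the $j$-th column of $B(1,1)$ is the column of $t$-free entries $(a_{2j-i-1})_{1\le i\le m}$. (The case $m=3$ of this is exactly the matrix $B(1,1)$ displayed in Table \ref{table}.)

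Next I would expand $\det B(1,1)$ along this first column. Since only its first two entries, $a_1$ and $a_0$, are nonzero, cofactor expansion gives at once
$$\det B(1,1)=a_1\cdot\det B(1,1)_{1,1}-a_0\cdot\det B(1,1)_{2,1},$$
where $B(1,1)_{1,1}$ (resp.\ $B(1,1)_{2,1}$) is obtained by deleting the first column together with the first (resp.\ second) row of $B(1,1)$.

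It then remains only to identify $\det B(1,1)_{1,1}$ with $D(m,0)$. Deleting the first row and first column of $B(1,1)$ and reindexing, the resulting $(m-1)\times(m-1)$ matrix has $(i,j)$-entry $a_{2j-i}$ for $i,j\in\{1,\dots,m-1\}$; this is precisely $\mathcal M_{nt}(P,0,m)$, whose determinant is $D(m,0)$ — the very identification already used in Lemma \ref{B21}. Substituting this back yields the asserted formula. There is no serious obstacle here: the one point needing care is the bookkeeping of the successive row/column deletions and reindexings, so that $B(1,1)_{1,1}$ is correctly recognized as $\mathcal M_{nt}(P,0,m)$; the minor $\det B(1,1)_{2,1}$ is deliberately left unevaluated, to be treated separately.
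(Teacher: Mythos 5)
Your proposal is correct and coincides with the paper's own argument: the paper likewise writes out $B(1,1)$ explicitly, expands $\det B(1,1)$ along the first column (whose only nonzero entries are $a_1$ and $a_0$), and identifies the $(1,1)$-minor $B(1,1)_1$ with $\mathcal M_{nt}(P,0,m)$, whose determinant is $D(m,0)$, leaving $\det B(1,1)_{2,1}$ untouched. Your extra bookkeeping of the entries $a_{2j-i-1}$ (for $j\ge 2$) and of the first column $(a_1,a_0,0,\dots,0)^t$ is accurate and only makes explicit what the paper displays as a matrix.
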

\begin{proof}
Let us consider the matrix $B(1,1)$:
$$B(1,1)=\left(
\begin{matrix}
a_1 & a_2 & a_4 & a_6 &  \ldots & 0 & 0 & 0 \\
a_0   & a_1 & a_3 & a_5 &  \ldots& 0 & 0 & 0  \\
0   & a_0 & a_2 & a_4 &  \ldots & 0 & 0 & 0 \\
0& 0   &  a_1&a_3 &  \ldots & 0& 0 & 0 \\
0& 0   & a_0&a_2 &  \ldots&0&  0 & 0\\
\ldots   &  \ldots  &   & \ldots & \ldots & \ldots & \ldots & \ldots \\
0   & 0   &  0  &  0  & \ldots&a_{m-3}  & a_{m-1}  &0  \\
0& 0& 0& 0& \ldots & a_{m-4} & a_{m-2} & a_{m}\\
0   & 0   &  0  &  0  & \ldots&a_{m-5} & a_{m-3} & a_{m-1}  \\
\end{matrix}
 \right)
 $$
We see $B(1,1)_{1}= \mathcal M_{nt}(P,0,m)$. Expanding the determinant of $B(1,1)$ along the first column immediately gives the result:
$$\det B(1,1) = a_1\cdot \det \mathcal M_{nt}(P,0,m) - a_0\cdot \det {B}(1,1)_{2,1}=a_1\cdot D(m,0) - a_0\cdot \det {B}(1,1)_{2,1} .$$
\end{proof}

We consider $B(1,u)$ for $u\ge 2.$ By the definition of $\mathcal M(P,2,m+1)$ and the construction of $B(1,u)$ we have:
$$B(1,u)=
\begin{array}{c}
\begin{array}{ccccccccc}
&&\text{$u$-th column}&&&&&
\end{array}
\\
\left(
\begin{matrix}
a_0 & \ldots &a_{2u-4} & a_{2u-1} & a_{2u}& \ldots & 0 &  0 \\
0   &\ldots &a_{2u-5} & a_{2u-2} & a_{2u-1}& \ldots & 0 & 0 \\
0   & \ldots &a_{2u-6} & a_{2u-3} & a_{2u-2}&\ldots & 0 &  0 \\
0   & \ldots &a_{2u-7} & a_{2u-4} & a_{2u-3}&\ldots & 0 &  0 \\
\ldots   &  \ldots & \ldots & \ldots &\ldots &   \ldots & \ldots &  \ldots \\
0   & \ldots &a_{2u-m-1} & a_{2u-(m-2)} & a_{2u-(m-3)}& \ldots  & a_{m-1}  &0  \\
0& \ldots &a_{2u-m-2} & a_{2u-(m-1)} & a_{2u-(m-2)}&\ldots &  a_{m-2} & a_{m}\\
0   &\ldots &a_{2u-m-3} & a_{2u-m} & a_{2u-(m-1)}& \ldots& a_{m-3} & a_{m-1}  \\
\end{matrix}
 \right)
\end{array}$$
The Laplace expansion of the determinant of $B(1,u)$ along the first column gives the following result:

\begin{mylemma} \label{B1u}
$$\det B(1,u) = a_0 \cdot \det B(1,u)_1\;, \quad \text{ for }u=2, \ldots, m$$
\newline
\end{mylemma}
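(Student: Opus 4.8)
The plan is to obtain $\det B(1,u)$ from a single Laplace (cofactor) expansion along the first column of $B(1,u)$. First I would read off that first column. Since $u\ge 2$, the first column of $B(1,u)$ is not the rescaled ($u$-th) one, so by the construction of $B(l,u)$ it is simply the column of $t$-free coefficients of the first column of $\mathcal M(P,2,m+1)_1$, which in turn is the second column of $\mathcal M(P,2,m+1)$ with its top entry deleted. By \eqref{eq:MPolynomialEntry} with $n=2$, $q=2$ and column index $j=2$, the $i$-th entry of that second column is $a_{4-i}t^2-2a_{3-i}t+a_{2-i}$, whose constant-in-$t$ coefficient is $a_{2-i}$; this equals $a_1$ for $i=1$, equals $a_0$ for $i=2$, and vanishes for $i\ge 3$ (recall $a_l=0$ for $l\notin\{0,\dots,m\}$). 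Deleting the $i=1$ entry and re-indexing the rows, the first column of $B(1,u)$ is therefore $(a_0,0,0,\dots,0)$, in agreement with the explicit matrix displayed just before the lemma.

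Then I would expand $\det B(1,u)$ along this first column: only the top entry $a_0$ contributes, and its cofactor is the determinant of the matrix obtained from $B(1,u)$ by deleting the first row and the first column, which is precisely $B(1,u)_1$ in the notation fixed before Table~\ref{table}. This gives $\det B(1,u)=a_0\cdot\det B(1,u)_1$ for $u=2,\dots,m$, which is the assertion.

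I do not expect any genuine obstacle here: the lemma is a bookkeeping consequence of the location of the $t$-free coefficients in $\mathcal M(P,2,m+1)$ — in its $j$-th column the constant-in-$t$ coefficient is $a_{2j-i-2}$, which can be nonzero only for rows $i$ with $2j-2-m\le i\le 2j-2$, so that for $j=2$ only rows $1$ and $2$ survive and deleting row $1$ leaves the single entry $a_0$. This is the same mechanism already used in Lemma~\ref{Blu} and Lemma~\ref{B21}. The only points needing a little care are the index shift caused by removing the first row and column, and the hypothesis $u\ge 2$: it is exactly this hypothesis that makes column $1$ — rather than the $u$-th column — the one whose $t$-free part is taken; for $u=1$ the relevant statement is instead the one recorded in Lemma~\ref{B11}.
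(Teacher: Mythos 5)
Your proposal is correct and matches the paper's own argument: the paper likewise observes (via the displayed form of $B(1,u)$) that for $u\ge 2$ the first column is $(a_0,0,\dots,0)^{t}$, since the $t$-free part of the second column of $\mathcal M(P,2,m+1)$ is $a_{2-i}$, and then obtains the identity by Laplace expansion along that column, the $(1,1)$ cofactor being exactly $\det B(1,u)_1$. Your bookkeeping of the index shift and of the role of the hypothesis $u\ge 2$ is accurate, so there is nothing to add.
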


\begin{proof}[Proof of theorem \ref{thmn2}:]
\mbox{}\\
By proposition \ref{PropH112} we have
$$H_{1,1,2}= -2\cdot \sum_{\substack{l=1,\ldots, m+1 \\ u = \{1,\ldots ,m\}}} \det B(l,u) .$$
By lemma \ref{Blu} we have $\det B(l,u) = 0$ for $l \in \{2, \ldots, m+1\}, u \in \{2,\ldots, m\}$.
\\
By lemma \ref{B21} we have $\det B(2,1) = a_0\cdot D(m,0).$
\\
Lemma \ref{LemmaBl1} gives the equality
$$\sum_{l=3}^{m+1}\det B(l,1)=a_0^2D(m,1).$$
Lemma \ref{B11} gives the equality
$$\det B(1,1) = a_1 \cdot D(m,0) -a_0\det B(1,1)_{2,1}.$$
By lemma \ref{B1u} we have for $u\in \{2, \ldots, m\}$ $$\det B(1,u) = a_0 \cdot \det B(1,u)_1.$$
This gives the result:
$$H_{1,1,2}=-2\cdot (a_0^2\cdot D(m,1) + a_0 \cdot D(m,0) + a_1 \cdot D(m,0)-a_0\det B(1,1)_{2,1}+a_0\sum_{u=2}^m\det B(1,u)_1)$$
Therefore, to prove the theorem, it remains to show that
\begin{equation} \label{principaleq}\det B(1,1)_{2,1}=\sum_{u=2}^m\det B(1,u)_1 \quad .
\end{equation}
\renewcommand{\qedsymbol}{}
\end{proof}
\renewcommand{\qedsymbol}{$\square$}
\noindent
We give an explicit example of the equality \ref{principaleq} for $m=4$.
To explain this equality, we start with the matrix
$\mathcal M_{nt}(P,0,4)= \begin{pmatrix}
a_1&a_3&a_5 \\
a_0&a_2&a_4\\
a_{-1}&a_1&a_3
\end{pmatrix}$, where $a_i = 0$ for $i\notin \{a_0,\ldots , a_4\}$.
If we consecutively increase the row-indices  of the entries of the first, second and third row of $M_{nt}(P,0,4)$ by 1, we get three matrices $B(1,1)_{2,1}=\begin{pmatrix}
a_2&a_4&a_6 \\
a_0&a_2&a_4\\
a_{-1}&a_1&a_3
\end{pmatrix} =
\begin{pmatrix}
a_2&a_4&0 \\
a_0&a_2&a_4\\
0&a_1&a_3
\end{pmatrix},\quad \begin{pmatrix}
a_1&a_3&a_5 \\
a_1&a_3&a_5\\
a_{-1}&a_1&a_3
\end{pmatrix}, \quad
\begin{pmatrix}
a_1&a_3&a_5 \\
a_0&a_2&a_4\\
a_0&a_2&a_4
\end{pmatrix}$.
The determinant of the last two matrices is $0$, because they have two identical lines. Therefore, if we build the sum of the determinants, we get the left side of the equality \eqref{principaleq}
$$\begin{array}{ccccccc}
\det B(1,1)_{2,1}&=&\det B(1,2)_1&+&\det B(1,3)_1&+&\det B(1,4)_1 \\
\begin{vmatrix}
a_2&a_4&0 \\
a_0&a_2&a_4\\
0&a_1&a_3
\end{vmatrix}
&=&
\begin{vmatrix}
a_2&a_3&0 \\
a_1&a_2&a_4\\
a_0&a_1&a_3
\end{vmatrix}
&+&
\begin{vmatrix}
a_1&a_4&0 \\
a_0&a_3&a_4\\
0&a_2&a_3
\end{vmatrix}
&+&
\begin{vmatrix}
a_1&a_3&0 \\
a_0&a_2&0\\
0&a_1&a_4
\end{vmatrix}
\end{array}$$
If we consecutively increase the column-indices  of the entries of the first, second and third column of $M_{nt}(P,0,4)$ by 1, we get the three matrices $B(1,2)_1, B(1,3)_1$ and $B(1,4)_1$. The sum of the determinants gives the right side of \eqref{principaleq}.

To prove equality \eqref{principaleq}, we need the following:
\\

For $i,j,k\in\{1, \ldots , n\}$ let $\alpha _{i,j,k}$ be generators of the free ring $\mathbb{Z}[\alpha _{i,j,k}]$.
Let $\sigma \in S_n,$ where $S_n$ is the symmetric group of order $n$. We define:
{$$\displaystyle M(r,\sigma):= (\alpha_{i,j,{\sigma(i)}})_{\substack{1 \leq i \leq n \\ 1 \leq j \leq n}}\quad \text{ and } \quad
M(c,\sigma):=(\alpha_{i,j,{\sigma(j)}})_{\substack{1 \leq i \leq n \\ 1 \leq j \leq n}}$$}
($r$, $c$ mean rows, columns respectively).
\begin{myexample2}
Let $n=3$ and $\sigma = \left(\begin{matrix} 1&2&3 \\ 2 & 3 & 1 \end{matrix}\right)$. We have
$$M(r,\sigma)=\begin{pmatrix}
\alpha_{1,1,\sigma(1)}&\alpha_{1,2,\sigma(1)}&\alpha_{1,3,\sigma(1)} \\
\alpha_{2,1,\sigma(2)}&\alpha_{2,2,\sigma(2)}&\alpha_{2,3,\sigma(2)} \\
\alpha_{3,1,\sigma(3)}&\alpha_{3,2,\sigma(3)}&\alpha_{3,3,\sigma(3)}
\end{pmatrix}
= \begin{pmatrix}
\alpha_{1,1,2}&\alpha_{1,2,2}&\alpha_{1,3,2} \\
\alpha_{2,1,3}&\alpha_{2,2,3}&\alpha_{2,3,3} \\
\alpha_{3,1,1}&\alpha_{3,2,1}&\alpha_{3,3,1}
\end{pmatrix}$$
$$
M(c,\sigma)=\begin{pmatrix}
\alpha_{1,1,\sigma(1)}&\alpha_{1,2,\sigma(2)}&\alpha_{1,3,\sigma(3)} \\
\alpha_{2,1,\sigma(1)}&\alpha_{2,2,\sigma(2)}&\alpha_{2,3,\sigma(3)} \\
\alpha_{3,1,\sigma(1)}&\alpha_{3,2,\sigma(2)}&\alpha_{3,3,\sigma(3)}
\end{pmatrix}
= \begin{pmatrix}
\alpha_{1,1,2}&\alpha_{1,2,3}&\alpha_{1,3,1} \\
\alpha_{2,1,2}&\alpha_{2,2,3}&\alpha_{2,3,1} \\
\alpha_{3,1,2}&\alpha_{3,2,3}&\alpha_{3,3,1}
\end{pmatrix}$$
\end{myexample2}

\begin{myproposition}\label{DetIdent}
$$ \sum_{\sigma \in S_n} \det M(r,\sigma) =
\sum_{\sigma \in S_n} \det M(c,\sigma)
$$

\end{myproposition}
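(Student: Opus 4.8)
The plan is to expand both determinants by the Leibniz formula and then reorganize the resulting double sum over $S_n\times S_n$ by a substitution inside the symmetric group.

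First I would record the two Leibniz expansions. Since the $(i,j)$-entry of $M(r,\sigma)$ is $\alpha_{i,j,\sigma(i)}$, we get $\det M(r,\sigma)=\sum_{\pi\in S_n}\operatorname{sgn}(\pi)\prod_{i=1}^n\alpha_{i,\pi(i),\sigma(i)}$. Since the $(i,j)$-entry of $M(c,\sigma)$ is $\alpha_{i,j,\sigma(j)}$, the factor picked out of row $i$ in column $\pi(i)$ carries third index $\sigma(\pi(i))$, so $\det M(c,\sigma)=\sum_{\pi\in S_n}\operatorname{sgn}(\pi)\prod_{i=1}^n\alpha_{i,\pi(i),\sigma(\pi(i))}$.

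Next I would sum the second identity over $\sigma\in S_n$ and interchange the two summations, obtaining $\sum_{\sigma}\det M(c,\sigma)=\sum_{\pi}\operatorname{sgn}(\pi)\sum_{\sigma}\prod_{i=1}^n\alpha_{i,\pi(i),\sigma(\pi(i))}$. Now fix $\pi$ and substitute $\tau:=\sigma\circ\pi$; as $\sigma$ ranges over $S_n$ so does $\tau$, and $\tau(i)=\sigma(\pi(i))$, so the inner sum equals $\sum_{\tau}\prod_{i=1}^n\alpha_{i,\pi(i),\tau(i)}$, which no longer mentions $\sigma$. Substituting back and interchanging summations again gives $\sum_{\sigma}\det M(c,\sigma)=\sum_{\tau}\sum_{\pi}\operatorname{sgn}(\pi)\prod_{i=1}^n\alpha_{i,\pi(i),\tau(i)}=\sum_{\tau}\det M(r,\tau)$, which is the assertion after renaming $\tau$ to $\sigma$.

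Equivalently — and this is how I would actually phrase the final write-up — one checks that $(\pi,\sigma)\mapsto(\pi,\sigma\circ\pi)$ is a sign-preserving bijection of $S_n\times S_n$ carrying the monomial $\prod_i\alpha_{i,\pi(i),\sigma(\pi(i))}$, which occurs with coefficient $\operatorname{sgn}(\pi)$ in $\sum_\sigma\det M(c,\sigma)$, to the monomial $\prod_i\alpha_{i,\pi(i),\tau(i)}$, which occurs with coefficient $\operatorname{sgn}(\pi)$ in $\sum_\sigma\det M(r,\sigma)$. There is no genuine obstacle here; the only point requiring care is the bookkeeping of which of the three indices is permuted by $\pi$ (the columns, via the Leibniz expansion) and which by $\sigma$ (the "depth" coordinate), and making sure that the column expansion of $M(c,\sigma)$ produces the composition $\sigma\circ\pi$ — rather than $\pi\circ\sigma$ or $\sigma^{-1}$ — so that the substitution is a clean bijection.
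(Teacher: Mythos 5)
Your proposal is correct and follows essentially the same route as the paper's own proof: Leibniz expansion of both determinants, interchange of the sums over $\sigma$ and the expansion permutation, and the re-indexing $\sigma\mapsto\sigma\circ\pi$ (the paper's $\sigma\circ\rho$) for each fixed column permutation. The bookkeeping point you flag — that expanding $M(c,\sigma)$ produces the composition $\sigma\circ\pi$ — is exactly the observation the paper uses, so there is nothing to add.
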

\begin{proof}
\footnote{\thanks{Darij Grinberg; CSENG School of Mathematics, University of Minnesota}}
For a fixed $\sigma \in S_n$ the determinant of the matrix $M(r,\sigma)=(\alpha _{i,j,{\sigma (i)}})_{\substack{1 \leq i \leq n \\ 1 \leq j \leq n}}$ is :
$$\det (\alpha _{i,j,{\sigma (i)}})_{\substack{1 \leq i \leq n \\ 1 \leq j \leq n}}= \sum_{\rho \in S_n} (-1)^{sgn(\rho)}\alpha_{1,\rho(1),{\sigma(1)}}\cdots \alpha _{n,\rho(n),{\sigma (n)}}$$
We build the sum over $\sigma \in S_n:$
$$\sum_{\sigma \in S_n} \det (\alpha _{i,j,{\sigma (i)}})_{\substack{1 \leq i \leq n \\ 1 \leq j \leq n}}=\sum_{\sigma \in S_n} \sum_{\rho \in S_n} (-1)^{sgn(\rho)}\alpha_{1,\rho(1),{\sigma(1)}}\cdots \alpha _{n,\rho(n),{\sigma (n)}}=$$
\begin{equation}
\sum_{\rho \in S_n} \sum_{\sigma \in S_n} (-1)^{sgn(\rho)}\alpha_{1,\rho(1),{\sigma(1)}}\cdots \alpha _{n,\rho(n),{\sigma (n)}}
\label{eq:prp1}
\end{equation}
In the same way we consider the index of the columns $(\alpha _{i,j,{\sigma (j)}})_{\substack{1 \leq i \leq n \\ 1 \leq j \leq n}}$ with $\sigma \in S_n$:
$$\det (\alpha _{i,j,{\sigma (j)}})_{\substack{1 \leq i \leq n \\ 1 \leq j \leq n}}= \sum_{\rho \in S_n} (-1)^{sgn(\rho)}\alpha_{1,\rho(j),{\sigma(\rho(j))}}\cdots \alpha _{n,\rho(n),{\sigma (\rho(n))}}$$
We build the sum over $\sigma \in S_n:$
$$\sum_{\sigma \in S_n} \det (\alpha _{i,j,{\sigma (j)}})_{\substack{1 \leq i \leq n \\ 1 \leq j \leq n}}=\sum_{\sigma \in S_n} \sum_{\rho \in S_n} (-1)^{sgn(\rho)}\alpha_{1,\rho(1),{\sigma(\rho(1))}}\cdots \alpha _{n,\rho(n),{\sigma (\rho(n))}}=$$
\begin{equation}
\sum_{\rho \in S_n} \sum_{\sigma \in S_n} (-1)^{sgn(\rho)}\alpha_{1,\rho(1),{\sigma(\rho(1))}}\cdots \alpha _{n,\rho(n),{\sigma (\rho(n))}}
\label{eq:prp2}
\end{equation}
For a fixed permutation $\rho \in S_n$ in equation \eqref{eq:prp1} and \eqref{eq:prp2} we get two expressions:
$$\sum_{\sigma \in S_n} (-1)^{sgn(\rho)}\alpha_{1,\rho(1),{\sigma(1)}}\cdots \alpha _{n,\rho(n),{\sigma (n)}}$$ and $$\sum_{\sigma \in S_n} (-1)^{sgn(\rho)}\alpha_{1,\rho(1),{\sigma(\rho(1))}}\cdots \alpha _{n,\rho(n),{\sigma (\rho(n))}}$$.
\\
As $\sigma$ runs over all permutations in $S_n$, so does $\sigma \circ \rho$, that is $\{\sigma \vert \sigma \in S_n\} = \{ \sigma \circ \rho \vert \sigma \in S_n\}$. We conclude:
$$\sum_{\sigma \in S_n} (-1)^{sgn(\rho)}\alpha_{1,\rho(1),{\sigma(1)}}\cdots \alpha _{n,\rho(n),{\sigma (n)}} = \sum_{\sigma \in S_n} (-1)^{sgn(\rho)}\alpha_{1,\rho(1),{\sigma(\rho(1))}}\cdots \alpha _{n,\rho(n),{\sigma (\rho(n))}}$$ and
$$\sum_{\rho \in S_n} \sum_{\sigma \in S_n} (-1)^{sgn(\rho)}\alpha_{1,\rho(1),{\sigma(1)}}\cdots \alpha _{n,\rho(n),{\sigma (n)}}=
\sum_{\rho \in S_n} \sum_{\sigma \in S_n} (-1)^{sgn(\rho)}\alpha_{1,\rho(1),{\sigma(\rho(1))}}\cdots \alpha _{n,\rho(n),{\sigma (\rho(n))}} $$
\end{proof}

\noindent
We apply proposition \ref{DetIdent} to the case $n=m-1$,
\begin{equation}\label{DefOfAlphas}
\alpha_{i,j,k} =
\begin{cases} a_{2j-i}, \text{ if } k\neq 1,\; \\
a_{2j-i+1}, \text{ if } k = 1
\end{cases}
\end{equation}

\begin{mylemma}
\label{remarkMrsigma}
Let $\sigma \in S_{m-1}$.
\begin{enumerate}
\item If $\sigma(1)=1$, then $M(r,\sigma) = B(1,1)_{2,1}$.
\item If $\sigma(1)\neq 1$, then $\det M(r,\sigma)=0$.
\item For $v\in \{1,\ldots, m-1\}$ we have: If $\sigma(v)=1,$ then $M(c,\sigma)= B(1,v+1)_1$, \\
or equivalently for all $\sigma \in S_{m-1}$  $M(c,\sigma)= B(1,\sigma^{-1}(1)+1)_1$ .
\end{enumerate}
\end{mylemma}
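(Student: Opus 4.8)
The plan is to prove all three statements by directly unwinding definition \eqref{DefOfAlphas} of the $\alpha_{i,j,k}$ and reading off the rows, respectively the columns, of $M(r,\sigma)$ and $M(c,\sigma)$. Everything rests on the following observation: since $\alpha_{i,j,k}=a_{2j-i}$ whenever $k\neq 1$, the $i$-th row of $M(r,\sigma)$ is literally the $i$-th row of $\mathcal M_{nt}(P,0,m)$ as soon as $\sigma(i)\neq 1$; and since $\alpha_{i,j,1}=a_{2j-i+1}$, the unique row with $\sigma(i)=1$, namely the one of index $\sigma^{-1}(1)$, is the corresponding row of $\mathcal M_{nt}(P,0,m)$ with every entry $a_s$ replaced by $a_{s+1}$. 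The same dichotomy holds column by column for $M(c,\sigma)$.

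For part 1 I would assume $\sigma(1)=1$, so that only the first row of $M(r,\sigma)$ is the shifted one while rows $2,\dots,m-1$ coincide with those of $\mathcal M_{nt}(P,0,m)$. By Lemma \ref{B11} we have $B(1,1)_1=\mathcal M_{nt}(P,0,m)$; reading from the displayed form of $B(1,1)$ that its first column is $(a_1,a_0,0,\dots,0)^{t}$ and its first row is $(a_1,a_2,a_4,a_6,\dots)$, deleting the second row and the first column leaves the first row of $B(1,1)$ without its leading entry, that is $(a_2,a_4,a_6,\dots)$, stacked on top of rows $2,\dots,m-1$ of $\mathcal M_{nt}(P,0,m)$. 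Since $(a_2,a_4,a_6,\dots)$ is exactly the first row of $\mathcal M_{nt}(P,0,m)$ with each $a_s$ replaced by $a_{s+1}$, the resulting matrix is precisely $M(r,\sigma)$. For part 2, if $\sigma(1)\neq 1$ I would put $v=\sigma^{-1}(1)\geq 2$: the $v$-th row of $M(r,\sigma)$ has $j$-th entry $a_{2j-v+1}=a_{2j-(v-1)}$, which is the $j$-th entry of the $(v-1)$-th row of $\mathcal M_{nt}(P,0,m)$, hence of the $(v-1)$-th row of $M(r,\sigma)$ as well (note $v-1\neq v$). Thus $M(r,\sigma)$ has two equal rows and $\det M(r,\sigma)=0$.

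Part 3 is the column analogue, with one extra index shift coming from the deletion of the first column. Fixing $v=\sigma^{-1}(1)$, the matrix $M(c,\sigma)$ is $\mathcal M_{nt}(P,0,m)$ with the subscripts of its $v$-th column shifted up by $1$ and all other columns untouched. On the other hand, for $u\geq 2$ the first column of $B(1,u)$ is $(a_0,0,\dots,0)^{t}$ --- this is exactly the column along which the Laplace expansion of Lemma \ref{B1u} is performed --- so deleting the first row and the first column of $B(1,u)$ moves its $u$-th column into position $u-1$ and, after matching subscripts against the displayed form of $B(1,u)$, yields $\mathcal M_{nt}(P,0,m)$ with the subscripts of its $(u-1)$-th column shifted up by $1$ and all others untouched. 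Taking $u=v+1$, which lies in $\{2,\dots,m\}$ because $v\in\{1,\dots,m-1\}$, identifies $B(1,v+1)_1$ with $M(c,\sigma)$, which is the asserted equality $M(c,\sigma)=B(1,\sigma^{-1}(1)+1)_1$.

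The whole lemma is a matter of careful index bookkeeping and contains no real conceptual difficulty; the only place where a slip is easy is the off-by-one in part 3, namely that the shifted column of $B(1,u)_1$ sits in position $u-1$ rather than $u$, because removing the first column pushes every remaining column one step to the left. Once this is tracked correctly, all three claims follow by comparing entries with the explicit matrices $\mathcal M_{nt}(P,0,m)$, $B(1,1)$ and $B(1,u)$ already displayed.
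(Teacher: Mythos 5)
Your proof is correct and follows essentially the same route as the paper: a direct entry-by-entry index verification, phrased via column/row shifts of $\mathcal M_{nt}(P,0,m)$ rather than via the explicit formula for the entries $b_{i,j}$ of $B(1,u)$, with part 2 argued by the same observation that rows $v-1$ and $v$ of $M(r,\sigma)$ coincide. The index bookkeeping, including the off-by-one shift in part 3, checks out against the displayed matrices.
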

\begin{proof}
Let $u \in \{1, \ldots , m\}$. We have:
$$(i,j)\text{-th entry } b_{ij} \text{ of } B(1,u)  =
\begin{cases}
a_{2(j+1)-(i+1)-2}=a_{2j-i-1}, &\text{ if } j \neq u \\
a_{2(u+1)-(i+1)-1}=a_{2u-i}, &\text{ if } j=u
\end{cases}
$$
\begin{enumerate}
\item
Since $B(1,1)_{2,1}$ is the $(2,1)$ minor of $B(1,1)$, we have:
$$(i,j)\text{-th entry of } B(1,1)_{2,1} =
\begin{cases} b_{1,j+1} = a_{2j}  &\text{ for } i=1, 1\le j \le m-1 \\
b_{i+1,j+1}=a_{2j-i}  &\text{ for } 2 \le i \le m-1, 1 \le j \le m-1
\end{cases}$$
If $\sigma(1)=1$, then by definition \ref{DefOfAlphas} the entries $\alpha_{i,j,\sigma(i)}$ of the matrix $M(r,\sigma)$ are:
$$\alpha_{i,j,\sigma(i)} =
\begin{cases} a_{2j-i+1} = a_{2j}\quad, & \text{ if } i = 1 \\
a_{2j-i} \quad, &\text{ if } \sigma(i)\neq 1 \text{ equivalently } i \neq 1
\end{cases}$$
Therefore, the entries of $M(r,\sigma)$ and $B(1,1)_{2,1}$ are identical for $\sigma(1)=1.$
\item If $\sigma (1) \neq 1,$ then $v \in \{2, \ldots ,m-1\}$ exists  with $\sigma (v) =1.$ We have, that the $v$-th row and $(v-1)$-th row of $M(r,\sigma)$ are identical, since
$$\alpha_{v,j,\sigma(v)}=a_{2j-v+1} =a_{2j-(v-1)+0}=\alpha_{v-1,j,\sigma(v-1)} \quad \text{ for } j=1, \ldots, m-1.$$
We conclude $\det M(r, \sigma)=0.$
\item Let $u = v+1 \in \{2, \dots, m\}$.
As in Lemma \ref{B1u} defined, $B(1,u)_1$ is the $(1,1)$ minor of $B(1,u),$ where
$$(i,j)\text{-th entry of } B(1,u)_{1} = b_{i+1,j+1}=
\begin{cases}
a_{2(j+2)-(i+2)-2}=a_{2j-i}\quad, &\text{ if } j \neq v \\
a_{2(u+1)-(i+2)-1}=a_{2u-i-1}\quad, &\text{ if } j=v
\end{cases}.$$
If $\sigma (v)=1$, then the entries of the $i$-th row of $M(c,\sigma)$ are equal to:
$$\alpha_{i,j,{\sigma (j)}} = \begin{cases} a_{2j-i} & \text{ for } j\in \{1, \ldots , m-1\}, j\neq v \\
a_{2v-i+1}=a_{2u-i-1} & \text{ for } j=v \end{cases},$$ which are precisely the entries of $B(1,u)_1.$
\qedhere
\end{enumerate}
\end{proof}
Hence, we get:
\begin{enumerate}
\item $\sum_{\sigma \in S_{m-1}} \det M(r,\sigma) =(m-2)! \cdot\det B(1,1)_{2,1}$
\item $\sum_{\sigma \in S_{m-1}} \det M(c,\sigma) =(m-2)!\cdot\sum_{u=2}^m\det B(1,u)_1$
\end{enumerate}
Applying proposition \ref{DetIdent} to $\alpha_{i,j,k}$ from \ref{DefOfAlphas}, we get from the above formulas:

$$\det B(1,1)_{2,1}=\sum_{u=2}^m\det B(1,u)_1 $$ i. e. equality \ref{principaleq}, which concludes the proof of theorem \ref{thmn2}
\qed{}

\end{document}